\renewcommand{\cite}{\citep}
\newtheorem{theorem}{Theorem}
\newtheorem{lemma}{Lemma}
\newtheorem{corollary}{Corollary}
\newtheorem{proposition}{Proposition}
\newtheorem{definition}{Definition}
\newtheorem{problem}{Problem}
\renewcommand\bibsection%
\newcommand{\ignore}[1]{}
\DeclareMathAlphabet{\mathcal}{OMS}{cmsy}{m}{n}      
\title{\LARGE \bf Pursuit on a Graph under Partial Information\\ from Sensors}
\author{Shreyas Sundaram,\thanks{S. Sundaram is with the School of Electrical and Computer Engineering at Purdue University, W. Lafayette, IN 47907. E-mail: {\tt sundara2@purdue.edu}. S. Sundaram's work was supported by the U. S. Air Force Research Lab Summer Faculty Fellowship Program (AFRL-SFFP).}~ Krishnamoorthy Kalyanam\thanks{K. Krishnamoorthy is with the InfoSciTex corporation (AFRL Contractor), Dayton, OH 45431.  E-mail: {\tt krishnak@ucla.edu}.}  ~and~David W. Casbeer\thanks{D. Casbeer is with the Autonomous Control Branch, Air Force Research Lab, Wright-Patterson AFB, OH 45433.  E-mail: {\tt david.casbeer@us.af.mil}.}
}
\begin{document}

\date{}
\maketitle
\thispagestyle{empty}
\pagestyle{empty}

\begin{abstract}
We consider a class of pursuit-evasion problems where an evader enters a directed acyclic graph and attempts to reach one of the terminal nodes.  A pursuer enters the graph at a later time and attempts to capture the evader before it reaches a terminal node.  The pursuer can only obtain information about the evader's path via sensors located at each node in the graph; the sensor measurements are either green or red (indicating whether or not the evader has passed through that node).  We first show that it is NP-hard to determine whether the pursuer can enter with some nonzero delay and still be guaranteed to capture the evader, even for the simplest case when the underlying graph is a tree.  This also implies that it is NP-hard to determine the largest delay at which the pursuer can enter and still have a guaranteed capture policy. We further show that it is NP-hard to approximate (within any constant factor) the largest delay at which the pursuer can enter. Finally, we provide an algorithm to compute the maximum pursuer delay for a class of node-sweeping policies on tree networks and show that this algorithm runs in linear-time for bounded-degree trees.
\end{abstract}

\section{Introduction}
\label{sec:intro}

The problem of capturing an evader (or target) by one or more pursuers has a long history in computer science, discrete mathematics, differential game theory, and control theory \cite{parsons1978pursuit, isaacs1999differential, aigner1984game}, covering a large variety of different formulations.   For example,  in {\it Cops-and-Robbers} games, multiple pursuers seek to capture an evader, often under the assumption of full visibility of the evader \cite{aigner1984game, bonato2011game, seymour1993graph, fomin2008annotated,megiddo1988complexity}.   

In this paper, we consider a class of pursuit-evasion problem on {\it graphs} under {\it partial information} for the pursuer.  Specifically, we consider a directed acyclic graph where the evader enters at a source node and attempts to reach a terminal node.  The pursuer enters at some later time and attempts to capture the evader at one of the nodes of the graph before it reaches its target.  However, the pursuer can {\it only} obtain information about the evader's location by visiting sensors located at the nodes of the graph; these sensors measure whether or not the evader passes through the node.  This scenario, where the information available to the pursuer is a function of the actions taken by the pursuer, differs from other related work on pursuit-evasion with partial information \cite{chung2011search}.  For example, in   \cite{clarke2009witness, clarke2006cops}, the pursuers obtain information about the evader via witnesses or alarms, regardless of their location in the graph.  Similarly, in \cite{isler2008role,johnson1983column}, the pursuer can sense the evader only when they are sufficiently close together.  In contrast, our work considers the case where the pursuer must explicitly visit certain locations of the graph in order to gain information about the evader.

Previous work that has studied the same general setting as ours includes \cite{krishnamoorthy2013optimal} where the underlying graph is a Manhattan grid, \cite{chen2016intruder} where a dynamic programming approach was provided to analyze general networks, and \cite{krishnamoorthy2016pursuit} which provided an (exponential-time) algorithm to calculate the pursuer policy that guarantees capture while maximizing the pursuer entrance delay. The contributions of this paper are as follows.  First, we provide a formal complexity characterization of finding optimal pursuer policies for this class of pursuit-evasion problems;  specifically, we show that it is NP-hard to determine whether the pursuer can guarantee capture after entering with a positive delay.  Second, we show that it is NP-hard to approximate the maximum pursuer entrance delay within any finite constant factor.  
Third, we provide an explicit algorithm to calculate the maximum pursuer delay for a specific class of {\it node-sweeping} policies (to be precisely defined later) on tree networks, and show that this algorithm runs in linear-time for bounded-degree trees.  Our results make connections to traveling salesperson and vehicle routing problems with time-windows, thereby providing new insights  into this class of pursuit-evasion problems on graphs under partial information.
\section{The Pursuit-Evasion Problem}
\label{sec:problem}

Consider a directed acyclic graph (DAG) $\mathcal{R} = \{\mathcal{V}_R, \mathcal{E}_R\}$, representing a road network.\footnote{We adopt  standard graph-theoretic terminology throughout (e.g., see \cite{cormen2009introduction}).} The graph has a single {\it source} (or {\it root}) node $s \in \mathcal{V}_R$.  The nodes that have no outgoing edges are called {\it goal nodes} and denoted by the set $\mathcal{G} \subset \mathcal{V}_R$.  

A ground vehicle (the evader) enters the network through the source node $s$ at time $t = 0$.  The time taken by the evader to go from $v_i$ to $v_j$ (if that edge exists in the network) is denoted by $d_e(v_i,v_j)$.  The objective of the evader is to reach one of the goal nodes in $\mathcal{G}$.  

Each node $v_i \in \mathcal{V}_R$ has an Unattended Ground Sensor (UGS).  The sensor can either be in state ``green'' indicating that the evader has not yet passed through the node containing that sensor, or in state ``red'' with an associated time-stamp indicating when the evader passed through that node.

There is an unmanned aerial vehicle (the pursuer) which enters the road network via the source node $s$ at time $t = D$, for some $D \ge 0$.  We refer to the pursuer entrance time $D$ as the {\it pursuer delay}.   The pursuer can move between any pair of nodes in the network, with a travel time of $d_p(v_i,v_j)$ for going from $v_i$ to $v_j$.   These pursuer travel times are symmetric, nonnegative, and satisfy the triangle inequality
$$
d_p(v_i, v_j) \le d_p(v_i,v_k) + d_p(v_k,v_j) \enspace \forall v_i, v_k, v_j \in \mathcal{V}_R.
$$
Furthermore, the pursuer has a speed advantage over the evader, i.e.,  $d_p(v_i, v_j) \le d_e(v_i,v_j)$ for all $(v_i,v_j) \in \mathcal{E}_R$.  

The objective of the pursuer is to capture the evader before it reaches a goal node.  The pursuer can only obtain information about the evader's movements via the UGSs.  Specifically, when the pursuer reaches a node $v_i \in \mathcal{V}_R$, it obtains the state (green or red) of the UGS, and if red, the time at which the evader passed through that node.  After arriving at a node and obtaining the UGS measurement, the pursuer can decide which node to move to next (or stay at the current node).  The pursuer captures the evader if and only if it is at a node at the time the evader reaches that node.  

At any given point in time $t$, let $\mathcal{H}(t)$ be the history of the nodes visited by the pursuer up to time $t$, along with the measurements received from the corresponding UGSs.  A {\it policy} for the pursuer is a mapping $\mu$ from the history $\mathcal{H}(t)$ and the pursuer's current node  to the next node that the pursuer should visit.

The above model is summarized as follows.\footnote{Since we will be interested in problems that have finite representations, we will henceforth take all distances to be  nonnegative integers.}

\begin{definition} 
An instance of the Pursuit-Evasion problem is given by a DAG $\mathcal{R} = \{\mathcal{V}_R,\mathcal{E}_R\}$ containing a single source node $s$, a nonnegative evader travel time $d_e(v_i,v_j)$ for each edge $(v_i,v_j) \in \mathcal{E}_R$, and a nonnegative pursuer travel time $d_p(v_i,v_j)$ for all distinct pairs of vertices $v_i, v_j \in \mathcal{V}_R$.  The pursuer travel times are symmetric, satisfy the triangle inequality,  and $d_p(v_i,v_j) \le d_e(v_i,v_j)$ for all $(v_i,v_j) \in \mathcal{E}_R$.
\label{def:instance}
\end{definition}
  
Note that capture is always guaranteed if $D = 0$ (as the pursuer and evader will be co-located at the source node in that case).  We will be considering the following objectives within the above class of Pursuit-Evasion problems. 

\begin{problem}
{\it Maximum Pursuer Delay Problem (MPDP).} Given an instance of the  Pursuit-Evasion problem, find the largest time $D^*$ at which the pursuer can enter the graph so that there is a policy that guarantees capture of the evader.
\label{prob:max_delay}
\end{problem}

\begin{problem}
{\it Capture Feasibility Problem (CFP).} Given an instance of the Pursuit-Evasion problem, is there some time $D > 0$ at which the pursuer can enter the graph and still be guaranteed to capture the evader?  
\label{prob:capture_feasibility}
\end{problem}

Note that an algorithm that solves the MPDP will also yield an answer to the CFP.  We will show that the CFP is NP-hard, which then implies NP-hardness of MPDP as well.  In the next section, we characterize the solution of the CFP and MPDP for a specific class of instances of the Pursuit-Evasion problem, which will subsequently lead to the results described above.

\section{Pursuit-Evasion on Spider Networks}
\label{sec:spider_network}

Consider a class of road networks of the following form.  Let $r \in \mathbb{N}$.  The node set is partitioned as $\mathcal{V}_R = \{s\} \cup \mathcal{C} \cup \mathcal{G}$, where $\mathcal{C} = \{v_{c_1}, v_{c_2}, \ldots, v_{c_r}\}$ is a set of {\it core nodes}, and $\mathcal{G} = \{v_{g_1}, v_{g_2}, \ldots, v_{g_r}\}$ is the set of goal nodes.   The edge set is defined as
$$
\mathcal{E}_R = \left\{(s, v_{c_i}), 1 \le i \le r\right\} \cup \left\{(v_{c_i},v_{g_i}), 1 \le i \le r\right\}.
$$

The travel times for the pursuer and evader on this graph are defined as follows.  For the pursuer, let the distance function $d_p(\cdot, \cdot)$ be positive and satisfy the triangle inequality, but otherwise arbitrary.  For the evader, let each edge $(s,v_{c_i}) \in \mathcal{E}_R$ have length $d_e(s,v_{c_i}) = d_p(s, v_{c_i})$ (i.e., the time taken for the evader to go from the source node to a core node is the same as the corresponding travel time for the pursuer).  For each edge $(v_{c_i}, v_{g_i})$, define the length to be  $d_e(v_{c_i}, v_{g_i}) = L - d_e(s, v_{c_i})$ for some $L \in \mathbb{Z}_{> 0}$ satisfying $L - d_e(s, v_{c_i}) > d_p(v_{c_i},v_{g_i})$.   Thus, regardless of the path taken by the evader, it will arrive at the corresponding goal node at time $L$.  We refer to the above road network as a {\it spider network} (see Fig.~\ref{fig:spider_network} for an illustration).

\begin{figure}[t]
\begin{center}
\begin{tikzpicture}[scale = 0.25]
\def\nodesize{20pt}

  \node [circle, inner sep=0pt, minimum size=\nodesize, draw] (s) at (-11,0)  {\footnotesize $s$};

   \node [circle, inner sep=0pt, minimum size=\nodesize, draw] (vc1) at (0.5,5.5)  {\footnotesize $v_{c_1}$};
   \node [circle, inner sep=0pt, minimum size=\nodesize, draw] (vc2) at (0.5,0)  {\footnotesize $v_{c_2}$};
   \node [circle, inner sep=0pt, minimum size=\nodesize, draw] (vc3) at (0.5,-5.5)  {\footnotesize $v_{c_3}$};
   
   \draw[rounded corners, dashed] (-1.25, -7.5) rectangle (2.25, 7.5) {};
   \node at (0.5, 8.5) {$\mathcal{C}$};
  
  \draw [thick, ->] (s) -- node[sloped, above ] {\footnotesize $d_e(s,v_{c_1})$}  ++(vc1);
  \draw [thick, ->] (s) -- node[pos=0.65, above] {\footnotesize $d_e(s,v_{c_2})$}  ++(vc2);
  \draw [thick, ->] (s) -- node[sloped, below] {\footnotesize $d_e(s,v_{c_3})$}  ++(vc3);


  \node [circle, inner sep=0pt, minimum size=\nodesize, draw] (vg1) at (12,5.5) {\footnotesize $v_{g_1}$};
  \node [circle, inner sep=0pt, minimum size=\nodesize, draw] (vg2) at (12,0) {\footnotesize $v_{g_2}$};
  \node [circle, inner sep=0pt, minimum size=\nodesize, draw] (vg3) at (12,-5.5) {\footnotesize $v_{g_3}$};
  
   \draw[rounded corners, dashed] (10.25, -7.5) rectangle (13.75, 7.5) {};
   \node at (12, 8.5) {$\mathcal{G}$};
  
  
  \draw [thick, ->] (vc1) -- node[above] {\footnotesize $L-d_e(s,v_{c_1})$}  ++(vg1);
  \draw [thick, ->] (vc2) -- node[above] {\footnotesize $L-d_e(s,v_{c_2})$}  ++(vg2);
  \draw [thick, ->] (vc3) -- node[above] {\footnotesize $L-d_e(s,v_{c_3})$}  ++(vg3);


  
\end{tikzpicture}
\caption{A spider network with $r =3$.  The edge labels indicate the travel times for the evader.}
\label{fig:spider_network}
\end{center}
\end{figure}
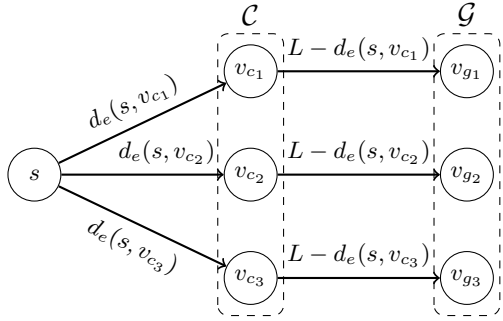

Now suppose the pursuer enters the network at time $D > 0$.  Since the travel times to go from the source to a core node are identical for the pursuer and the evader, and since the travel times satisfy the triangle inequality, the evader is guaranteed to have passed through one of the core nodes in $\mathcal{C}$ by the time the pursuer reaches any core node.  We thus have the following fact about the pursuer's optimal policy.

 \begin{proposition}
 Consider the Pursuit-Evasion problem on a spider network, where the pursuer enters at time $D > 0$.  Suppose there exists a pursuer policy $\mu^{\ast}$ that guarantees capture of the evader.  Then, there exists a policy $\mu$ (perhaps the same as $\mu^*$) that also guarantees capture of the evader and has the following property.  At each time $t > 0$,  $\mu(\mathcal{H}(t)) \in \mathcal{G}$ if and only if one of the following two conditions hold:
 \begin{enumerate}
 \item The pursuer has visited a core node with a red state at or prior to time $t$.
 \item The pursuer has visited $r-1$ of the core nodes at or prior to time $t$, all of which were in a green state.
 \end{enumerate}
 \label{prop:optimal_capture_goal_node}
 \end{proposition}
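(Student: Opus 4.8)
The plan is to show that whenever some pursuer policy guarantees capture, so does one of the following rigid \emph{canonical form}: fix an ordering $v_{c_{\pi(1)}},\ldots,v_{c_{\pi(r)}}$ of the core nodes; the pursuer drives directly from $s$ to $v_{c_{\pi(1)}}$, then directly to $v_{c_{\pi(2)}}$, and so on; the instant it reads a core node \emph{red} it drives straight to the associated goal node and waits there until time $L$, and the instant it has read $r-1$ core nodes \emph{green} (necessarily just after visiting $v_{c_{\pi(r-1)}}$) it drives straight to $v_{g_{\pi(r)}}$ and waits there. A policy of this form moves toward a goal node exactly when one of conditions (1)--(2) holds, so it already enjoys the stated property; hence it suffices, given a capturing policy $\mu^{\ast}$, to produce a capturing policy of this form.

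Two observations do most of the work. First --- as explained just before the proposition --- the pursuer reaches \emph{any} core node $v_{c_i}$ only at a time $\ge D + d_p(s,v_{c_i}) = D + d_e(s,v_{c_i})$, which strictly exceeds $d_e(s,v_{c_i})$, the time at which the evader would cross $v_{c_i}$; hence the evader has already left the core layer (so capture can occur only at a goal node), and a \emph{green} reading at $v_{c_i}$ \emph{certifies} that the evader did not traverse $v_{c_i}$. Since the evader crosses exactly one core node, it follows that after one \emph{red} reading the evader's target goal is known directly (condition (1)), after $r-1$ \emph{green} readings it is known by elimination (condition (2)), and at all earlier times at least two evader strategies are indistinguishable to the pursuer. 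Second, a reading taken at a goal node strictly before time $L$ is uninformative (it only reports that the evader has not yet arrived, which the pursuer already knows); moreover, since $d_p>0$ and $\mu^{\ast}$ captures, in the run of $\mu^{\ast}$ against evader strategy $i$ the pursuer first reaches a state satisfying (1) or (2) at a time \emph{strictly before} $L$, and at time $L$ it is never at a goal node other than $v_{g_i}$.

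Now run $\mu^{\ast}$ against each of the $r$ evader strategies. Before any \emph{red} reading these runs coincide, so $\mu^{\ast}$ determines a well-defined order $v_{c_{\pi(1)}},v_{c_{\pi(2)}},\ldots$ in which it first visits distinct core nodes; this list must reach length $r-1$, for otherwise $\mu^{\ast}$ would, in the all-green regime, stop visiting new core nodes after some set $S$ with $|S|\le r-2$, and then any two evader strategies whose crossed core lies outside $S$ (there are at least two) would induce identical pursuer runs while demanding the pursuer occupy two distinct goal nodes at time $L$ --- impossible. Edit each run: (i) before the first state satisfying (1) or (2), delete every visit to a goal node and every revisit to a core node --- these carry no information (second observation), and deleting them only advances the pursuer's arrival times (triangle inequality); (ii) from the first instant (1) or (2) holds, discard the rest of $\mu^{\ast}$'s trajectory and instead drive directly to the determined goal node $v_{g_i}$ and wait. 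Edit (ii) is valid because any $\mu^{\ast}$-path from the pursuer's current node $w$ to $v_{g_i}$ has $d_p$-length at least $d_p(w,v_{g_i})$, so the direct drive still reaches $v_{g_i}$ by time $L$; edit (i) only shortened the time to reach $w$. The result is exactly the canonical policy for the order $\pi$ (with $\pi(r)$ the core not among the first $r-1$), it captures every evader strategy, and it has the required property.

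I expect the real obstacle to be the two reductions in the last paragraph: one must check carefully that excising goal-node visits never destroys information or a capture --- hence the attention to what is read at time exactly $L$ --- and that the pursuer is genuinely \emph{forced} to explore $r-1$ distinct core nodes before committing (the indistinguishability argument), rather than being able to guess a goal node early and get lucky.
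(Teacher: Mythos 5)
Your proof is correct and follows essentially the same route as the paper's: shortcut detours via the triangle inequality, observe that goal-node readings strictly before time $L$ are uninformative, and use the fact that the ambiguity between evader targets must be resolved before $L$ to excise premature goal-node visits. You go somewhat further than the paper by extracting a fully canonical core-visiting order from the all-green run of $\mu^{\ast}$ and explicitly proving, via the indistinguishability argument, that $r-1$ distinct core nodes must be read --- details the paper's proof (``the core node that is eventually visited under $\mu'$'') leaves implicit.
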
 

\begin{proof}
We will start with the policy $\mu^*$ and modify it to yield a policy $\mu'$ that satisfies the ``if'' part of the proposition;  we will then further modify $\mu'$ to obtain a policy $\mu$ that satisfies both the ``if'' and ``only if'' parts, while guaranteeing capture.

To this end, at any time $t > 0$, suppose the history $\mathcal{H}(t)$ satisfies one of the two conditions in the proposition.  Then the pursuer immediately knows which goal node the evader is heading towards.  By the triangle inequality, if the pursuer is guaranteed to get to that goal node before the evader by following the policy $\mu^*$, it is also guaranteed to do so by going directly to the goal node.  Thus, define the policy $\mu'$ to be the same as $\mu^*$ when $\mathcal{H}(t)$ does not satisfy either condition in the proposition, and to have the pursuer go directly to the appropriate goal node when $\mathcal{H}(t)$ satisfies one of the conditions in the proposition.  The policy $\mu'$ guarantees capture and satisfies the ``if'' part of the proposition.

We will now further modify $\mu'$ to obtain a policy $\mu$ that satisfies both  the ``if'' and ``only if'' parts.  Suppose that at some $t > 0$, the  history $\mathcal{H}(t)$ does not satisfy one of the two conditions in the proposition, but that $\mu'(\mathcal{H}(t)) \in \mathcal{G}$.
Then there are at least two different goal nodes that are possible targets for the evader.  Since the evader arrival time at both nodes is equal to $L$, and since $\mu'$ guarantees capture, the policy must cause the pursuer to return to a core node to resolve the ambiguity before time $L$.  Otherwise, the evader can escape through a goal node that is not being occupied by the pursuer at time $L$.   Furthermore, since the pursuer has to return to a core node before time $L$, it gains no new information about the evader's path by visiting the goal node prescribed by $\mu'$.  Thus, by having the pursuer go directly to the core node that is eventually visited under $\mu'$, and by the triangle inequality, the pursuer is still guaranteed capture of the evader.  Therefore, define the policy $\mu$ to be the same as $\mu'$, except substitute goal nodes with the eventually visited core nodes for histories that do not satisfy the conditions in the proposition.  This policy $\mu$ guarantees capture, and only visits a goal node after all ambiguity about the evader's path has been resolved.
\end{proof}

The above result indicates that for spider networks, if it is possible for the pursuer to guarantee capture, it can do so by visiting each of the core nodes in some sequence until it finds a core node in a red state, or visits $r-1$ core nodes in green states.  In the former case, the pursuer then visits the goal node corresponding to the core node in the red state.  In the latter case, the pursuer visits the goal node corresponding to the unvisited core node.   In both cases, the pursuer captures the evader at the corresponding goal node.

We are now in a position to characterize the complexity of determining whether there is a policy that guarantees capture with a delay $D > 0$.  

\section{NP-Hardness of the Capture Feasibility and Maximum Pursuer Delay Problems}
\label{sec:complexity}

To show that the Capture Feasibility Problem (i.e., Problem~\ref{prob:capture_feasibility}) is NP-hard, we will give a reduction from the NP-hard Traveling Salesperson Problem, defined as follows \cite{papadimitriou1998combinatorial}.

\begin{definition}
An instance of the metric Traveling Salesperson Problem (TSP) consists of an undirected complete graph $\mathcal{J} =\{\mathcal{V}_J, \mathcal{E}_J\}$ with $n$ nodes, and a distance function $d: \mathcal{V}_J \times \mathcal{V}_J \rightarrow \mathbb{Z}_{\ge 0}$ satisfying the triangle inequality.
\label{def:TSP_instance}
\end{definition}

\begin{problem}
{\it Traveling Salesperson Problem (Decision Version).} 
Given an instance of the metric TSP along with a positive integer $T$, does there exist a cycle (tour) that visits all nodes in the graph and has total length strictly less than $T$?
\label{prob:TSP-DEC}
\end{problem}

We now provide the following theorem characterizing the complexity of the Capture Feasibility Problem.   

\begin{theorem}
The Capture Feasibility Problem (CFP) is NP-hard.
\label{thm:CFP_NPhard}
\end{theorem}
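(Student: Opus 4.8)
The plan is to reduce the decision version of metric TSP (Problem~\ref{prob:TSP-DEC}) to the CFP (Problem~\ref{prob:capture_feasibility}), using the spider networks of Section~\ref{sec:spider_network} together with Proposition~\ref{prop:optimal_capture_goal_node}. Given a metric TSP instance $(\mathcal{J},d,T)$ on cities $u_1,\dots,u_n$, I would build a spider network with one core node $v_{c_i}$ (and matching goal node $v_{g_i}$) per city, and set the pursuer distances so that the core nodes carry the TSP metric, $d_p(v_{c_i},v_{c_j})=d(u_i,u_j)$; the source is equidistant from every core node, $d_p(s,v_{c_i})=A$ for a fixed constant $A\ge\tfrac12\max_{i,j}d(u_i,u_j)$; and each goal sits ``directly behind'' its core node, $d_p(v_{c_i},v_{g_i})=B$ and $d_p(v_{c_i},v_{g_j})=d(u_i,u_j)+B$ for $i\neq j$, with $B$ a positive constant. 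The evader edge lengths are those forced by the spider construction for a common arrival time $L$ taken just large enough that Definition~\ref{def:instance} and the spider constraints hold (in particular $L>A+B$), and all quantities can be kept nonnegative integers after scaling. This reduction is clearly polynomial, and the first routine check is that $d_p$ is genuinely a metric: symmetry is immediate, and the ``behind-the-core'' distances are exactly the largest the triangle inequality permits, so global metricity reduces to the metricity of $d$.

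Next I would invoke Proposition~\ref{prop:optimal_capture_goal_node}: on a spider network a capturing policy may be assumed to visit core nodes in some order until it reads a red UGS (or $r-1$ green ones), then head straight to the corresponding goal. Because each goal lies directly behind its core, dashing from $v_{c_{\pi(r-1)}}$ to the goal of the last still-unvisited core node costs exactly as much as visiting that core node first and then its goal, so without loss of generality the policy follows $s\to v_{c_{\pi(1)}}\to\cdots\to v_{c_{\pi(r)}}$ and then steps to the relevant goal. Computing the pursuer's arrival time at $v_{c_{\pi(k)}}$ and adding the constant $B$ needed to reach its goal, the requirement ``capture is guaranteed no matter which arm the evader took'' collapses to the single deadline inequality $D+A+\sum_{j=2}^{r}d(u_{\pi(j-1)},u_{\pi(j)})+B\le L$ (the constraints for intermediate $k$ are implied, since the partial route lengths are monotone in $k$). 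Hence the CFP is a yes-instance if and only if some ordering $\pi$ of the cities gives a route of length $\sum_{j=2}^{r}d(u_{\pi(j-1)},u_{\pi(j)})$ strictly below $L-A-B$; choosing the constants so that this bound and this route quantity line up with $T$ and a traveling-salesperson tour makes the CFP instance a yes-instance precisely when the TSP instance is.

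The step that demands the most care --- and the main obstacle --- is making the pursuer's worst-case route coincide \emph{exactly} with a metric TSP tour length rather than merely a shortest-Hamiltonian-path length: since the pursuer always deduces the identity of the last core node by elimination and need never physically reach it, a naive construction measures an open Hamiltonian path through the cities, so one must either sharpen the gadget (for instance by co-locating the goal nodes with the source, so that the pursuer's excursion closes into a tour through a fixed depot) or first pass through the path variant of metric TSP, and fix $A$, $B$, and $L$ accordingly. A second, much milder point is to confirm that no adaptive policy can beat the sweeps isolated by Proposition~\ref{prop:optimal_capture_goal_node} --- but that is precisely the content of that proposition. Once the correspondence is pinned down we conclude that the CFP is NP-hard, and since any algorithm solving the MPDP (Problem~\ref{prob:max_delay}) also decides the CFP, NP-hardness of the MPDP follows immediately.
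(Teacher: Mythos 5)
Your high-level strategy is the same as the paper's (reduce metric TSP to the CFP on a spider network, using Proposition~\ref{prop:optimal_capture_goal_node} to canonicalize the pursuer's policy into ``sweep core nodes, then dash to a goal''), and you correctly diagnose the crux. But the construction you actually write down does not complete the reduction, and the point where it fails is exactly the one you flag as ``the main obstacle.'' With the source as an artificial depot equidistant (at distance $A$) from all $n$ core nodes and each goal a constant $B$ behind its core, the pursuer's worst-case excursion has length $A + B$ plus an \emph{open Hamiltonian path} through the $n$ cities, not a closed tour; so your deadline inequality decides shortest Hamiltonian path, which is not the problem you set out to reduce from (Problem~\ref{prob:TSP-DEC}). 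Neither of your sketched repairs is carried out: co-locating the goal nodes with your equidistant depot closes the walk into a tour through the \emph{depot} and the cities, which is $2A$ plus a Hamiltonian path on the cities --- still the path variant in disguise --- and passing through path-TSP would require separately establishing its NP-hardness, which your write-up does not do. So as it stands the proof has a genuine gap: the gadget that converts the sweep into a closed tour on the original metric is missing.

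The paper's resolution (Appendix~\ref{sec:CFP_proof}) is worth internalizing because it is a small but essential twist on your setup. The source $s$ is taken to be \emph{one of the TSP cities} (so there are only $n-1$ core nodes), and each goal node is placed so that $d_p(v_{c_i}, v_{g_i}) = d_p(v_{c_i}, s)$, with $d_p(v_{c_j}, v_{g_i}) = d_p(v_{c_j}, v_{c_i}) + d_p(v_{c_i}, v_{g_i})$ for $j \ne i$. Then the route $s \to v_{c_{\pi(1)}} \to \cdots \to v_{c_{\pi(n-2)}} \to v_{g_{\pi(n-1)}}$ has length exactly equal to a closed tour on all $n$ cities, in both the ``red found early'' and ``all green'' cases. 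Because the core-to-goal pursuer distances now vary with $d_p(v_{c_i}, s)$ rather than being a free constant $B$, one must also pad every TSP edge by $c = \lceil 2\max d /(n-2)\rceil$ (adjusting $T$ to $T + nc$) to guarantee $d_p(v_{c_i}, v_{g_i}) \le d_e(v_{c_i}, v_{g_i})$ as Definition~\ref{def:instance} requires; this padding is harmless to the TSP answer since every tour has exactly $n$ edges. Your remaining steps --- polynomiality, verifying the triangle inequality, the monotonicity argument showing the final deadline dominates the intermediate ones, and the appeal to Proposition~\ref{prop:optimal_capture_goal_node} for the converse direction --- are sound once this gadget is in place.
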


The proof of the theorem proceeds by taking any given instance of the metric TSP and carefully constructing an instance of the CFP on a spider network.  The answer to the constructed instance of the CFP is ``yes'' if only if the answer to the given instance of the metric TSP is ``yes.'' Since the TSP is NP-hard, the CFP is NP-hard as well.    The full proof is provided in Appendix~\ref{sec:CFP_proof}.  Theorem~\ref{thm:CFP_NPhard} immediately yields the following corollary.

\begin{corollary}
The Maximum Pursuer Delay Problem (MPDP) is NP-hard.
\label{cor:MPDP_NPhard}
\end{corollary}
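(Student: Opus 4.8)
The plan is to derive NP-hardness of MPDP from Theorem~\ref{thm:CFP_NPhard} by a trivial polynomial-time reduction: I would show that any algorithm solving MPDP can be used, with only a constant amount of extra work, to solve CFP. No transformation of the instance is needed, since a single instance of the Pursuit-Evasion problem (Definition~\ref{def:instance}) serves simultaneously as an instance of CFP and of MPDP.

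First I would record that the quantity $D^{*}$ that MPDP is asked to output is well-defined and finite. It is well-defined because the set of delays admitting a guaranteed-capture policy is nonempty: as noted just before Problem~\ref{prob:max_delay}, $D = 0$ always works, since the pursuer and evader are then co-located at $s$. It is bounded above because once $D$ is at least the earliest time at which the evader could reach a goal node along some $s$-to-goal path, the evader (taking that fastest path) has already escaped before the pursuer even enters, so no policy can guarantee capture. Since the DAG is finite and all travel times are nonnegative integers, this upper bound---and hence $D^{*}$---has encoding length polynomial in the size of the instance.

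The key step is then the equivalence ``the given CFP instance answers yes $\iff D^{*} > 0$'': if $D^{*} > 0$ then $D = D^{*}$ is itself a positive delay with a guaranteed-capture policy, so CFP answers yes; and conversely, any positive delay admitting such a policy is a lower bound for the maximum $D^{*}$, so a yes answer to CFP forces $D^{*} > 0$. Hence, given a hypothetical polynomial-time algorithm for MPDP, I would run it, compare the returned $D^{*}$ against $0$, and output ``yes'' iff $D^{*} > 0$; by the bound above the comparison costs polynomial time, so the whole procedure runs in polynomial time. This contradicts Theorem~\ref{thm:CFP_NPhard} unless $\mathrm{P}=\mathrm{NP}$, which is precisely the assertion that MPDP is NP-hard.

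I do not expect a genuine obstacle here: all of the substance lies in Theorem~\ref{thm:CFP_NPhard}, and the only point meriting a sentence of care is confirming that $D^{*}$ has polynomially bounded bit-length, so that ``solve MPDP, then test $D^{*}>0$'' is a bona fide polynomial-time (Turing) reduction rather than merely a computable one. As noted above, this is immediate from the finiteness of the graph together with the integrality and nonnegativity of the distances.
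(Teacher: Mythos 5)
Your proposal is correct and is essentially the same reduction the paper uses: solve MPDP on the given instance and answer CFP by testing whether $D^{*} > 0$. The extra care you take in verifying that $D^{*}$ is well-defined, finite, and of polynomial bit-length is a welcome tightening of the paper's one-line argument, but it does not change the approach.
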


\begin{proof}
Given any instance of the CFP, we can answer ``yes'' or ``no'' by first solving the MPDP on that instance, and determining whether the maximum delay is zero or nonzero.  Thus, the MPDP is also NP-hard.
\end{proof}

\section{Inapproximability of the Maximum Pursuer Delay Problem}
\label{sec:approx}

A typical approach to deal with NP-hard problems is to seek {\it approximation algorithms} that yield solutions within a guaranteed constant factor of the optimal \cite{williamson2011design}.  The approximation factor for such algorithms is defined as follows.

\begin{definition}
Suppose $\mathcal{P}$ is a maximization problem.  For $\alpha \ge 1$, an algorithm $\Gamma$ is said to be an $\alpha$-approximation algorithm for $\mathcal{P}$ if for every instance of $\mathcal{P}$, the solution $D$ provided by $\Gamma$ satisfies $D\le D^* \le \alpha D$, where $D^*$ is the optimal solution.
\end{definition}

Here, we show the following negative result for the MPDP. 

\begin{theorem}
It is NP-hard to approximate the solution to the MPDP within any constant finite factor.
\end{theorem}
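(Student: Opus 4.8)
The plan is to observe that the reduction behind Theorem~\ref{thm:CFP_NPhard} is already a \emph{gap-producing} reduction, with the gap sitting between $0$ and a strictly positive value, so that no finite multiplicative factor can bridge it. First I would recall the structure of that reduction: given an instance of the metric TSP together with a threshold $T$, the proof of Theorem~\ref{thm:CFP_NPhard} (see Appendix~\ref{sec:CFP_proof}) builds a spider-network instance of the Pursuit-Evasion problem whose optimal pursuer delay $D^*$ is strictly positive when the TSP instance admits a tour of length strictly less than $T$, and equals $0$ otherwise---the latter being exactly the statement that the CFP answer is ``no.'' I would note that, because all distances in the model are nonnegative integers, $D^*$ is a well-defined nonnegative integer, and in fact $D^* < L$ since the pursuer cannot guarantee capture once $D \ge L$ (the evader has reached its goal node by then).

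Next I would set up the contradiction. Fix any finite constant $\alpha \ge 1$ and suppose $\Gamma$ is a polynomial-time $\alpha$-approximation algorithm for the MPDP. Given a TSP instance, one builds the corresponding spider-network instance in polynomial time and runs $\Gamma$ on it to obtain an output $D$ satisfying $D \le D^* \le \alpha D$. On a ``no'' instance, $D^* = 0$ forces $D = 0$ (delays are nonnegative); on a ``yes'' instance, $D^* > 0$ together with $D^* \le \alpha D$ forces $D \ge D^*/\alpha > 0$. So testing whether the returned value $D$ is zero decides the decision version of metric TSP in polynomial time, giving $\mathrm{P} = \mathrm{NP}$. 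This establishes that approximating the MPDP within any finite constant factor is NP-hard.

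The only genuinely load-bearing ingredient, and hence the ``hard part,'' is already done for us: it is the verification inside Appendix~\ref{sec:CFP_proof} that the reduction produces $D^* = 0$ exactly on ``no'' instances (rather than some small positive delay). Everything else is bookkeeping. I would also point out, as a remark, that the argument nowhere uses that $\alpha$ is bounded, so it in fact rules out polynomial-time approximation to within any factor that is finite on each input---for instance, any factor polynomial in the instance size---which is stronger than the stated claim.
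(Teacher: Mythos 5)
Your proof is correct and follows essentially the same route as the paper: both arguments exploit the fact that any finite-factor approximation algorithm must distinguish $D^*=0$ from $D^*>0$ and would therefore solve the NP-hard CFP. The paper states this in two lines by invoking Theorem~\ref{thm:CFP_NPhard} as a black box, whereas you unroll the underlying TSP reduction explicitly; the added detail (and your closing remark that the argument rules out any input-dependent finite factor) is sound but not a different method.
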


\begin{proof}
Suppose there exists an approximation algorithm $\Gamma$ for the MPDP that yields a constant approximation factor $\alpha \ge 1$.  We claim that $\Gamma$ solves the CFP.  Specifically, for the given instance of Pursuit-Evasion, run $\Gamma$ on the instance.  Then, $\Gamma$ will return a positive solution if and only if capture is feasible with a positive delay (since $\alpha \ge 1$).  Since the CFP is NP-hard, approximating the MPDP to within any constant finite factor is NP-hard as well.
\end{proof}

\section{Computing Maximum Pursuer Delay for a Class of Pursuit Policies}
\label{sec:sweeping_policy}

In this section, we analyze a specific class of pursuer policies on tree networks and provide an algorithm to calculate the maximum pursuer delay for such policies and networks.  The main idea behind this class of policies is that the pursuer works its way down the tree, examining the children of a given node until it isolates the subtree that the evader took, and then focusing on that subtree.  We start with some formal definitions.

\subsection{Node-Sweeping Policy}

\begin{definition}
Let $\mathcal{T}= \{\mathcal{V}, \mathcal{E}\}$ be a directed tree, rooted at a node $s \in \mathcal{V}$.  For any given node $v \in \mathcal{V}$, the {\it depth} of that node is the number of edges in the unique path from $s$ to $v$ in the tree.  The depth of node $s$ is taken to be $0$.
\label{def:depth}
\end{definition}

\begin{definition}
Consider an instance of the Pursuit-Evasion problem, where the road network $\mathcal{R}$ is a tree. For each node $v \in \mathcal{V}_{\mathcal{R}}$, define $\mathcal{L}(v)$ to be the descendant of $v$ (or $v$ itself) with the largest depth, such that if the pursuer passes through $v$, it is also guaranteed to pass through $\mathcal{L}(v)$. 
\label{def:lowest_known_descendant}
\end{definition}

To illustrate the above concept, consider the road network in Fig.~\ref{fig:illustration}. If the evader passes through node $v_3$, it is also guaranteed to pass through nodes $v_6$ and $v_8$.  In this case $\mathcal{L}(v_3) = v_8$.  Similarly, $\mathcal{L}(v_6) = v_8$, $\mathcal{L}(v_8) = v_8$, $\mathcal{L}(v_2) = v_5$, and $\mathcal{L}(v_4) = v_7$.

\begin{figure}[t]
\begin{center}
\begin{tikzpicture}[scale = 0.25]
\def\nodesize{20pt}

  \node [circle, inner sep=0pt, minimum size=\nodesize, draw] (s) at (0,0)  {\footnotesize $s$};

   \node [circle, inner sep=0pt, minimum size=\nodesize, draw] (v2) at (4,-4)  {\footnotesize $v_{2}$};
   \node [circle, inner sep=0pt, minimum size=\nodesize, draw] (v3) at (5,0)  {\footnotesize $v_{3}$};
   \node [circle, inner sep=0pt, minimum size=\nodesize, draw] (v4) at (4,4)  {\footnotesize $v_{4}$};
   
   \node [circle, inner sep=0pt, minimum size=\nodesize, draw] (v5) at (8,-4)  {\footnotesize $v_{5}$};
   \node [circle, inner sep=0pt, minimum size=\nodesize, draw] (v6) at (10,0)  {\footnotesize $v_{6}$};
   \node [circle, inner sep=0pt, minimum size=\nodesize, draw] (v7) at (8,4)  {\footnotesize $v_{7}$};
   
   \node [circle, inner sep=0pt, minimum size=\nodesize, draw] (v8) at (15,0)  {\footnotesize $v_{8}$};

   \node [circle, inner sep=0pt, minimum size=\nodesize, draw] (v9) at (18,-3)  {\footnotesize $v_{9}$};   
   \node [circle, inner sep=0pt, minimum size=\nodesize, draw] (v10) at (18,3)  {\footnotesize $v_{10}$};

  \foreach \from/\to in {s/v2,s/v3,s/v4,v2/v5,v3/v6,v4/v7,v6/v8,v8/v9,v8/v10}
      \draw [thick, ->] (\from) -> (\to);
 

\end{tikzpicture}
\caption{A tree network that illustrates various features of node-sweeping policies.}
\label{fig:illustration}
\end{center}
\end{figure}
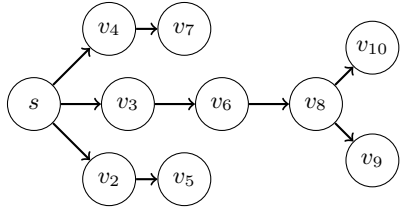

\begin{definition}[Node-Sweeping Policies]
 Let $\Pi$ be the set of pursuer policies that satisfy the following condition: the pursuer visits a node at depth $k \in \mathbb{N}$ only if it has identified the eventual state (green or red) of all nodes at depths  $\{0, 1, \ldots, k-1\}$.  We refer to the elements of $\Pi$ as {\it node-sweeping policies}.  \label{def:sweeping_policies}
\end{definition}

\begin{definition}[Node-Sweep]
Let $\{v_{i_1}, v_{i_2}, \ldots, v_{i_\mu}\}$ be the set of children of a given node.  A {\it node-sweep} is a permutation $\{v_{j_1}, v_{j_2}, \ldots, v_{j_{\mu}}\}$ of the nodes  such that the pursuer visits each of the nodes $v_{j_1}, v_{j_2}, \ldots, v_{j_{\mu-1}}$ in sequence to characterize the final state (red or green) of each visited node.  If all of the visited nodes are in a green state, the pursuer can end at any child of $\mathcal{L}(v_{j_{\mu}})$, or at a node on the path between $v_{j_{\mu}}$ and $\mathcal{L}(v_{j_{\mu}})$.   
\label{def:sweep}
\end{definition}

To illustrate the concept of a node-sweep, consider Fig.~\ref{fig:illustration} again. One possible node-sweep of the source's children is given by the permutation $\{v_2, v_4, v_3\}$. For this node-sweep, the pursuer visits the node $v_2$, waits until it is sure the evader is not going through that node, visits node $v_4$, and waits until it is sure the evader is not going through that node.  If both $v_2$ and $v_4$ are confirmed to be in a green state, the pursuer knows that the evader will go through node $v_3$.  By the definition of $\mathcal{L}(v_3)$, the pursuer also knows that $v_6$ and $v_8$ will eventually be in a red state.  By the definition of a node-sweeping policy, the pursuer is now allowed to visit $v_9$ or $v_{10}$, since it has identified the eventual states of all nodes at lower depths.  It can also visit any of $v_3, v_6, v_8$ if it so chooses.


\subsection{Complexity of Finding Optimal Node-Sweeping Policies}
Note that for a given network, there may be many strategies for sweeping the children of a given node (corresponding to the various permutations of those children).  In fact, optimally choosing the sweep sequence to solve the CFP or MPDP will be difficult in general, as indicated by the following corollary to Theorem~\ref{thm:CFP_NPhard}.

\begin{corollary}
The Capture Feasibility Problem and the Maximum Pursuer Delay Problem are NP-hard, even when restricted to node-sweeping policies on tree networks.
\end{corollary}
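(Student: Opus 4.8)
The plan is to observe that the reduction already used to prove Theorem~\ref{thm:CFP_NPhard} produces instances on which an optimal (unrestricted) policy is in fact a node-sweeping policy, so the restricted problems inherit the hardness with essentially no additional work. Concretely, the reduction maps each instance of the metric TSP to an instance of the CFP on a spider network, and a spider network is a rooted tree of depth two: the source $s$ sits at depth $0$, the core nodes $\mathcal{C}$ at depth $1$, and the goal nodes $\mathcal{G}$ at depth $2$. Thus these instances already live in the tree setting, and the only thing left to check is that restricting to node-sweeping policies does not change the answer to the CFP, nor the value $D^{*}$ in the MPDP, on spider networks.

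For that, I would invoke Proposition~\ref{prop:optimal_capture_goal_node}: on a spider network, if some policy guarantees capture with delay $D>0$, then there is a policy $\mu$ guaranteeing capture with the same delay that visits a goal node only after either (i) it has seen a core node in a red state, or (ii) it has seen $r-1$ core nodes, all green. I claim such a $\mu$ is a node-sweeping policy in the sense of Definition~\ref{def:sweeping_policies}. Before $\mu$ ever visits a depth-$1$ (core) node it already knows the eventual state of the unique depth-$0$ node $s$, which is red since the evader enters there; and before $\mu$ visits any depth-$2$ (goal) node, condition (i) or (ii) guarantees it has determined the eventual red/green state of every core node --- in case (i) the visited core node is red and the remaining $r-1$ are green, and in case (ii) the $r-1$ visited nodes are green and the single unvisited node is red. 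Hence $\mu$ satisfies the depth-by-depth information requirement and lies in $\Pi$. Equivalently, the policies described in the paragraph following Proposition~\ref{prop:optimal_capture_goal_node} (sweep the core nodes in some order until a red one is found or $r-1$ greens are seen, then go to the appropriate goal node) are precisely node-sweeps of the source's children followed by a descent to a goal node.

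It follows that, on spider networks, capture is feasible with positive delay if and only if it is feasible with positive delay using a node-sweeping policy, and more generally the maximum pursuer delay over all policies equals the maximum over node-sweeping policies. Therefore the instances produced by the reduction of Theorem~\ref{thm:CFP_NPhard} are ``yes''-instances of the node-sweeping-restricted CFP exactly when the underlying TSP instances are ``yes''-instances, giving NP-hardness of the CFP restricted to node-sweeping policies on trees; NP-hardness of the correspondingly restricted MPDP then follows exactly as in Corollary~\ref{cor:MPDP_NPhard}, by reading off whether the maximum delay is zero or nonzero. I do not anticipate a real obstacle; the only point requiring care is the bookkeeping that the policy $\mu$ supplied by Proposition~\ref{prop:optimal_capture_goal_node} meets the information requirement of Definition~\ref{def:sweeping_policies}, and that, as noted, is immediate from conditions (i)--(ii).
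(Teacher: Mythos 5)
Your proposal is correct and takes essentially the same route as the paper, which simply observes that node-sweeping policies are optimal for the spider networks used in the reduction of Theorem~\ref{thm:CFP_NPhard}; your additional verification that the policy from Proposition~\ref{prop:optimal_capture_goal_node} meets the depth-by-depth information requirement of Definition~\ref{def:sweeping_policies} is a correct (and welcome) elaboration of what the paper leaves implicit.
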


\begin{proof}
The proof follows from Theorem~\ref{thm:CFP_NPhard} and Corollary~\ref{cor:MPDP_NPhard} by noting that node-sweeping policies are, in fact, optimal for the spider networks defined in Section~\ref{sec:spider_network}. 
\end{proof}


\subsection{Calculating Maximum Pursuer Delay Under Node-Sweeping Policies in Tree Networks}

We now provide an algorithm to calculate the maximum pursuer delay for node-sweeping policies in general tree networks.  While this algorithm will not run in polynomial-time in general trees (due to the NP-hardness of the problem), we show that it will run in linear time (in the number of nodes) when the maximum out-degree of the tree (i.e., the number of children of any node) is bounded.  We will use the following definitions.

\begin{definition}[Evader Arrival Time at Node]
Given a tree network $\mathcal{R} = \{\mathcal{V}_{\mathcal{R}}, \mathcal{E}_{\mathcal{R}}\}$, for each node $v \in \mathcal{V}_{\mathcal{R}}$, let $t(v)$ be the evader distance from the source node $s$ to node $v$ (i.e., it is the time at which the evader would pass through node $v$ if its path goes through that node).
\end{definition}

\begin{definition}[Latest Pursuer Arrival Time at Node]
For each $v \in \mathcal{V}_{\mathcal{R}}$, let $D(v)$ be the {\it latest} time at which the pursuer can arrive at node $v$ and still be guaranteed to capture  the evader via a node-sweeping policy, {\it given} that the evader's path goes through node $v$.  
\end{definition}

\begin{definition}[Latest Time to Begin Sweep at Node]
For each $v \in \mathcal{V}_{\mathcal{R}}\setminus\{s\}$, let $S(v)$ be the {\it latest} time that the pursuer can {\it begin} a node-sweep at node $v$ (i.e., visit the siblings of node $v$, starting at $v$) and still be guaranteed to eventually capture the evader, {\it given} that the evader went through the parent of node $v$.  We define $S(v) = -\infty$ if it is not possible to guarantee (eventual) capture via a node-sweep starting at $v$.  If $v$ has no siblings, then $S(v) = D(v)$.
\label{def:latest_time_to_begin_sweep}
\end{definition}

We have the following relationships between the three terms defined above.

\begin{lemma}
For each node $v \in \mathcal{V}_{\mathcal{R}}$, $D(v) \ge t(v)$.  Furthermore, if $S(v) \ne -\infty$, $D(v) \ge S(v) \ge t(v)$.
\label{lem:delay_sweep_arrival}
\end{lemma}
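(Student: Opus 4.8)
The plan is to establish the two inequalities from the definitions by relating each quantity to what the pursuer must be able to accomplish. The inequality $D(v) \ge t(v)$ is the most elementary: $D(v)$ is the \emph{latest} time the pursuer can arrive at $v$ and still guarantee (eventual) capture given that the evader's path goes through $v$. If the pursuer were to arrive at $v$ strictly before the evader, it would have the \emph{option} of simply waiting at $v$ until time $t(v)$, at which point the evader arrives and capture occurs; hence arriving at any time $\le t(v)$ certainly permits a capture-guaranteeing continuation (in fact arrival exactly at $t(v)$ already captures). So the supremum defining $D(v)$ is at least $t(v)$. One caveat to address: $D(v)$ could in principle be $+\infty$ (if no finite arrival time works, or equivalently the constraint is vacuous because $v$ is, say, a node from which any arrival works); but in that degenerate case $D(v) \ge t(v)$ holds trivially, and when $D(v)$ is finite the waiting argument applies. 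I should also note that $t(v)$ is finite since it is a finite sum of edge travel times along the path from $s$ to $v$.

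For the second part, assume $S(v) \ne -\infty$, so a node-sweep starting at $v$ (visiting $v$ and its siblings in some order) can guarantee eventual capture given that the evader went through the parent of $v$. I want $S(v) \ge t(v)$ and $D(v) \ge S(v)$. For $S(v) \ge t(v)$: a node-sweep starting at $v$ begins by the pursuer arriving at $v$ and waiting there until the state of the UGS at $v$ is resolved — i.e., until it is certain whether the evader passed through $v$. Since the evader, if it goes through $v$, does so at time $t(v)$, the pursuer must be present at $v$ through time $t(v)$ to read a (possibly red) measurement; in particular the pursuer can begin the sweep as late as $t(v)$ and still resolve $v$'s state. So whenever a capture-guaranteeing sweep exists, beginning it at time $t(v)$ is consistent with resolving $v$ first, giving $S(v) \ge t(v)$. (Here I lean on Proposition~\ref{prop:optimal_capture_goal_node}-style reasoning localized to the sweep of one sibling-set: resolving the current node's state before moving on is exactly the structure of a node-sweep per Definition~\ref{def:sweep}.)

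For $D(v) \ge S(v)$: if the pursuer can \emph{begin} a successful node-sweep at $v$ at time $S(v)$, then in particular it has \emph{arrived} at $v$ by time $S(v)$, and from that arrival it can guarantee capture (by executing the rest of the sweep). But $D(v)$ is the latest arrival time at $v$ from which capture can be guaranteed \emph{given the evader goes through $v$}; since being able to begin a sweep from $v$ is one particular capture-guaranteeing continuation available upon arrival at $v$, and the sweep's success certainly covers the branch where the evader does go through $v$, we get $D(v) \ge S(v)$. The one subtlety I expect to be the main obstacle is the conditioning mismatch: $S(v)$ is defined conditioned on the evader going through the \emph{parent} of $v$, whereas $D(v)$ is conditioned on the evader going through $v$ itself. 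I would resolve this by observing that the event "evader goes through $v$" is a sub-event of "evader goes through parent of $v$" in a tree, so a sweep strategy that guarantees capture under the weaker conditioning, when restricted to histories consistent with the evader passing through $v$, still guarantees capture; hence the arrival time $S(v)$ at $v$ is admissible for the definition of $D(v)$, and the inequality follows. Chaining the three pieces gives $D(v) \ge S(v) \ge t(v)$ when $S(v) \ne -\infty$.
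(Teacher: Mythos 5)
Your proposal is correct and follows essentially the same route as the paper's proof: $D(v)\ge t(v)$ because arrival at time $t(v)$ already yields capture, $S(v)\ge t(v)$ because the sweep requires the pursuer to remain at $v$ until its state is resolved at time $t(v)$ so a feasible sweep can always be begun as late as $t(v)$, and $D(v)\ge S(v)$ because a feasible sweep begun at time $S(v)$ is itself a capture-guaranteeing continuation from arrival at $v$ (the paper states this as the contrapositive: $S(v)>D(v)$ would contradict the definition of $D(v)$). Your explicit handling of the conditioning mismatch (evader through the parent versus through $v$) is a minor refinement the paper leaves implicit, but the argument is the same.
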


\begin{proof}
Since capture is guaranteed if the pursuer arrives at node $v$ at time $t(v)$ (given that the evader goes through that node), we have $D(v) \ge t(v)$. Now, suppose that $S(v) \ne -\infty$, which means that the pursuer can begin a node-sweep at node $v$ and still be guaranteed to capture the evader via a node-sweeping policy.  If $S(v) > D(v)$ and the evader has gone through $v$, then it is impossible for the pursuer to capture the evader (by definition of $D(v)$).  Thus, $S(v) \le D(v)$.  Furthermore, by the definition of a node-sweep, the pursuer must conclusively determine the eventual state (red or green) of each node that it visits in the sweep.  Thus, the pursuer must depart node $v$ no earlier than $t(v)$ (since that is when the evader would get to that node).  Since the pursuer can depart a node as soon as it arrives, the latest time to begin a node-sweep at node $v$ must satisfy $S(v) \ge t(v)$, proving the claim.
\end{proof}

We now describe how to calculate the specific values of $D(\cdot)$ and $S(\cdot)$ for the nodes in the network.

\subsection*{Calculating $D(\cdot)$}
The following result immediately follows from the definitions of $D(v)$ and $t(v)$.

\begin{lemma}
For every goal node $v \in \mathcal{G}$, $D(v) = t(v)$.  
\label{lem:delay_goal_nodes}
\end{lemma}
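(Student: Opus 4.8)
The plan is to unwind the definitions of $D(v)$ and $t(v)$ for a goal node $v \in \mathcal{G}$. Recall that $D(v)$ is defined as the latest time at which the pursuer can arrive at node $v$ and still be guaranteed to capture the evader via a node-sweeping policy, given that the evader's path goes through $v$; and $t(v)$ is the time at which the evader passes through $v$ if its path goes through that node.

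First I would observe that, by Lemma~\ref{lem:delay_sweep_arrival}, we already have $D(v) \ge t(v)$ for every node, and in particular for $v \in \mathcal{G}$. So it only remains to show the reverse inequality $D(v) \le t(v)$ for goal nodes. For this, I would argue that if the pursuer arrives at a goal node $v$ strictly after time $t(v)$, and the evader's path goes through $v$, then capture is impossible: since $v$ is a goal node (it has no outgoing edges), the only node at which the pursuer can intercept the evader along this branch is $v$ itself, and the evader is present at $v$ only at the single instant $t(v)$. Once the evader reaches the goal node it has escaped, so arriving at $v$ any later than $t(v)$ means the pursuer misses the evader with no further opportunity. Hence the latest admissible arrival time is exactly $t(v)$, giving $D(v) \le t(v)$, and combined with the lower bound, $D(v) = t(v)$.

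I expect this proof to be essentially immediate — the main (minor) point to be careful about is simply that a goal node has no descendants, so there is no downstream node where a late-arriving pursuer could still intercept the evader; this is what distinguishes goal nodes from internal nodes, for which $D(v)$ can exceed $t(v)$ (the pursuer can afford to arrive at $v$ late and catch up further down). No substantive obstacle is anticipated; the lemma is a base case that anchors the recursive computation of $D(\cdot)$ for internal nodes developed in the sequel.
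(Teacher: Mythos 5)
Your proof is correct and matches the paper's intent: the paper simply asserts the lemma "immediately follows from the definitions," and your argument ($D(v)\ge t(v)$ from Lemma~\ref{lem:delay_sweep_arrival}, plus the observation that a goal node has no descendants so the pursuer must be present at $v$ no later than the instant $t(v)$ when the evader arrives and escapes) is exactly the reasoning being left implicit. No issues.
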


For each non-goal node in the network, the following result describes how to calculate $D(\cdot)$ in terms of the functions $t(\cdot)$ and $S(\cdot)$ for the descendants of that node.

\begin{lemma}
Consider any non-goal node $v \in \mathcal{V}_{\mathcal{R}}$, and the corresponding node $\mathcal{L}(v)$ defined in Definition~\ref{def:lowest_known_descendant}. Let $C(\mathcal{L}(v))$ be the children (if any) of $\mathcal{L}(v)$.  If $C(\mathcal{L}(v))$ is empty or $S(w) = -\infty$ for all $w \in C(\mathcal{L}(v))$, then $D(v) = t(\mathcal{L}(v)) - d_p(v, \mathcal{L}(v))$. Otherwise, 
$$
D(v) = \max_{w\in C(\mathcal{L}(v))}\left\{S(w) - d_p(v,w)\right\}.
$$
\label{lem:D_for_all_nodes}
\end{lemma}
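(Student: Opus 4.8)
The plan is to reason about what a node-sweeping policy must do once the pursuer has arrived at node $v$ (knowing the evader went through $v$), and to identify the latest arrival time that still permits a guaranteed capture. First I would recall that, by Definition~\ref{def:lowest_known_descendant}, passing through $v$ forces the evader to pass through every node on the path from $v$ to $\mathcal{L}(v)$; consequently the pursuer gains no branching information along this path and, by the triangle inequality, can do no worse by going directly from $v$ to $\mathcal{L}(v)$, arriving at time (arrival at $v$) $+\, d_p(v,\mathcal{L}(v))$. So the problem reduces to: what is the latest time the pursuer can be at $\mathcal{L}(v)$ and still guarantee capture? Since $\mathcal{L}(v)$ is, by maximality of its depth, either a goal node (no children) or a node whose children genuinely branch the evader's path, the pursuer's next task is exactly to begin a node-sweep of the children of $\mathcal{L}(v)$.

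Next I would handle the two cases. \textbf{Case 1: $C(\mathcal{L}(v))$ is empty, or $S(w)=-\infty$ for every child $w$.} If $\mathcal{L}(v)$ has no children then it is a goal node; capture is guaranteed precisely when the pursuer is present at the evader's arrival time $t(\mathcal{L}(v))$ (being there later is too late since the evader is already gone; being there earlier is fine), so the latest arrival at $\mathcal{L}(v)$ is $t(\mathcal{L}(v))$. If instead every child has $S(w)=-\infty$, then no node-sweep of those children can guarantee capture regardless of when it starts; I would argue that the only remaining way to guarantee capture is to be at $\mathcal{L}(v)$ itself when the evader passes through, i.e.\ again at time $t(\mathcal{L}(v))$ — there is nowhere productive to go. In both sub-cases the latest arrival at $\mathcal{L}(v)$ is $t(\mathcal{L}(v))$, and pulling this back along the forced path gives $D(v)=t(\mathcal{L}(v))-d_p(v,\mathcal{L}(v))$, using again that the direct route is optimal by the triangle inequality. \textbf{Case 2: some child $w\in C(\mathcal{L}(v))$ has $S(w)\ne-\infty$.} Here the pursuer, after reaching $\mathcal{L}(v)$, must commit to starting a node-sweep at some particular child $w$; by Definition~\ref{def:latest_time_to_begin_sweep} the latest time it may begin that sweep at $w$ is $S(w)$, so the latest arrival at $\mathcal{L}(v)$ that still allows the pursuer to proceed (by going directly to $w$ and starting there) is $S(w)-d_p(\mathcal{L}(v),w)$ — and the pursuer will pick the child $w$ maximizing this quantity. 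Pulling back along the forced path from $v$ to $\mathcal{L}(v)$ and invoking the triangle inequality to collapse $d_p(v,\mathcal{L}(v))+d_p(\mathcal{L}(v),w)$ into a direct trip $d_p(v,w)$ yields
$$
D(v) = \max_{w\in C(\mathcal{L}(v))}\left\{S(w) - d_p(v,w)\right\}.
$$

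For the converse direction (that the pursuer cannot do better than this value) I would argue: any node-sweeping policy that guarantees capture starting from $v$ must, because of Definition~\ref{def:sweeping_policies}, fully resolve the states of all shallower nodes before descending — in particular it cannot skip past $\mathcal{L}(v)$'s children — so it is forced either to occupy a terminal/ambiguity-resolving position at $\mathcal{L}(v)$ at time $t(\mathcal{L}(v))$ (Case 1) or to begin a sweep of $C(\mathcal{L}(v))$ no later than the start time feasibility allows, which is exactly captured by $S(\cdot)$ (Case 2). The triangle inequality then shows no routing trick lets the pursuer arrive at $v$ later than the stated bound. The main obstacle I anticipate is making the reduction ``the pursuer may as well head straight to $\mathcal{L}(v)$ and then straight to its chosen child'' fully rigorous: one must verify that detours buy nothing — that no information is available along the forced path $v\to\mathcal{L}(v)$, that the pursuer learns nothing useful by visiting $\mathcal{L}(v)$'s children in a different order than dictated by the eventual sweep, and that waiting versus moving is handled correctly (the pursuer may depart a node as soon as it arrives, so arrival-time bounds, not departure-time bounds, are what propagate). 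Carefully invoking Proposition~\ref{prop:optimal_capture_goal_node}-style reasoning (ambiguity must be resolved before the evader reaches a goal) together with the triangle inequality should close this gap.
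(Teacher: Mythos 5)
Your proposal is correct and follows essentially the same route as the paper's proof: reduce the question to the latest admissible arrival time at $\mathcal{L}(v)$ (when no child admits a feasible sweep) or at a child $w$ of $\mathcal{L}(v)$ with finite $S(w)$, and then use the triangle inequality to show that travelling directly from $v$ to that target dominates any detour through intermediate nodes on the forced path. The only difference is presentational: the paper makes your anticipated ``detours buy nothing'' step explicit by iterating backwards node-by-node along the path from $\mathcal{L}(v)$ to $v$, showing at each intermediate node $z$ that $D(\mathcal{L}(v)) - d_p(z,\mathcal{L}(v)) \le S(w) - d_p(z,w)$, which is exactly the collapse you invoke in one step.
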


\begin{proof}
By Lemma~\ref{lem:delay_sweep_arrival}, $D(v) \ge t(v)$.  Suppose the pursuer arrives at $v$ at some time strictly larger than $t(v)$, and that the evader has passed through $v$ (so that the node is in a red state).  Then the pursuer knows that all nodes on the path from $v$ to $\mathcal{L}(v)$ will (eventually) be in a red state, and can thus visit any of those nodes or the children of $\mathcal{L}(v)$ under a node-sweeping policy.  

Consider node $\mathcal{L}(v)$.  If this is a goal node, then $D(\mathcal{L}(v)) = t(\mathcal{L}(v))$ by Lemma~\ref{lem:delay_goal_nodes}.  If this is not a goal node, and the pursuer arrives at $\mathcal{L}(v)$ after time $t(\mathcal{L}(v))$, then the pursuer must be prepared to sweep its children.  If it chooses a child $w$ of $\mathcal{L}(v)$ to start its sweep, it must get to $w$ no later than $S(w)$.  Thus, the latest the pursuer can get to $\mathcal{L}(v)$ is the maximum (over all children of $\mathcal{L}(v)$) of the quantity $S(w)-d_p(\mathcal{L}(v),w)$.  If it is not possible to guarantee capture via a node-sweep starting at any child (i.e., $S(w) = -\infty$ for all $w \in C(\mathcal{L}(v))$), then the pursuer must capture the evader before it passes $\mathcal{L}(v)$, and thus $D(\mathcal{L}(v)) = t(\mathcal{L}(v))$.  Thus, the value of $D(\mathcal{L}(v))$ is as indicated by the lemma.

If $\mathcal{L}(v) \ne v$, consider the node $z$ that directly precedes $\mathcal{L}(v)$ on the path from $v$ to $\mathcal{L}(v)$.  If the pursuer arrives at $z$ after time $t(z)$, it can either choose to proceed to $\mathcal{L}(v)$ or to a child $w$ of $\mathcal{L}(v)$.  In the latter case, as above, the latest the pursuer can arrive at $z$ is the maximum value (over all children of $\mathcal{L}(v)$) of the quantity $S(w)-d_p(z,w)$.  If the pursuer instead proceeds to $\mathcal{L}(v)$, it must arrive there before time $D(\mathcal{L}(v))$.  In this case, the pursuer must get to $z$ no later than $D(\mathcal{L}(v)) - d_p(z, \mathcal{L}(v))$.  Recall from the preceding argument that if there is some sweep of $\mathcal{L}(v)$'s children that guarantees capture, then $D(\mathcal{L}(v))$ is given by $S(w)-d_p(\mathcal{L}(v),w)$ for some child $w$.  Thus, if the pursuer chooses to go to $\mathcal{L}(v)$ from $z$, it must get to $z$ no later than
\begin{align*}
D(\mathcal{L}(v)) - d_p(z,\mathcal{L}(v)) &= S(w)-d_p(\mathcal{L}(v),w) - d_p(z,\mathcal{L}(v)) \\
&\le S(w) - d_p(z,w)
\end{align*}
by the triangle inequality.  In other words, by proceeding directly to the child node $w$ from $z$ (rather than first visiting $\mathcal{L}(v)$), the pursuer can get to $z$ later and still guarantee capture.  Thus, starting at $z$, the pursuer should directly go to a child of $\mathcal{L}(v)$ to begin a node-sweep (if that option is available), or else, go to $\mathcal{L}(v)$ to capture the evader. This yields the value $D(z)$, as indicated by the lemma.

The above argument can be repeated by working backwards for each node on the path from $v$ to $\mathcal{L}(v)$.  In each case, it will be optimal (by the triangle inequality) for the pursuer to go directly to a child of $\mathcal{L}(v)$ to begin a node-sweep (if that option is available), or else go to $\mathcal{L}(v)$ to capture the evader.
\end{proof}

\subsection*{Calculating $S(\cdot)$}

To characterize $S(\cdot)$, we first define the following notion.

\begin{definition}
Consider a node $v \in \mathcal{V}_{\mathcal{R}}$ and its children $C(v) = \{v_{i_1}, v_{i_2}, \ldots, v_{i_\mu}\}$.\footnote{We omit the dependence of $\mu$ on $v$ for notational convenience.}  Consider a node-sweep $P$ of the children, with the associated node sequence $\{v_{j_1}, v_{j_2}, \ldots, v_{j_{\mu-1}}, w\}$, where $w$ is either a child of $\mathcal{L}(v_{j_{\mu}})$ or some node on the path between $v_{j_{\mu}}$ and $\mathcal{L}(v_{j_{\mu}})$.   We say that $P$ is a {\it feasible node-sweep starting at $v_{j_1}$} if it is possible to guarantee (eventual) capture of the evader after following that sweep.  If $P$ is a feasible node-sweep, the {\it latest start time for node-sweep $P$} is the latest time that the pursuer can arrive at (and depart) node $v_{j_1}$ and still maintain feasibility of the node-sweep.
\label{def:feasible_sweep}
\end{definition}

The following result characterizes the feasibility of a node-sweep starting at a given node, in terms of the quantities $t(\cdot), D(\cdot)$ and $S(\cdot)$.

\begin{lemma}
Consider a set of siblings $\{v_{i_1}, v_{i_2}, \ldots, v_{i_\mu}\}$, and a node-sweep $P$ with node sequence $\{v_{j_1}, v_{j_2}, \ldots, v_{j_{\mu-1}}, w\}$, where $w$ is either a child of $\mathcal{L}(v_{j_{\mu}})$ or some node on the path between $v_{j_{\mu}}$ and $\mathcal{L}(v_{j_{\mu}})$.  Then $P$ is a feasible node-sweep if and only if:
\begin{itemize}
\item The pursuer can follow the sequence  $v_{j_1}, v_{j_2}, \ldots, v_{j_{\mu-1}}$ such that it visits each node $v_{j_k}$ in the interval $[t(v_{j_k}), D(v_{j_k})]$, $k \in \{1, 2, \ldots, \mu-1\}$.
\item If $w$ is a node on the path between $v_{j_{\mu}}$ and $\mathcal{L}(v_{j_{\mu}})$, the pursuer arrives at $w$ before $D(w)$.
\item If $w$ is a child of $\mathcal{L}(v_{j_{\mu}})$, the pursuer arrives at $w$ before $S(w)$.
\end{itemize}
\label{lem:S_characterization}
\end{lemma}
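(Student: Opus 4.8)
The statement characterizes when a node-sweep $P$ with sequence $\{v_{j_1}, \ldots, v_{j_{\mu-1}}, w\}$ is feasible. The plan is to prove both directions by tracking the information the pursuer has at each step of the sweep and invoking the definitions of $D(\cdot)$ and $S(\cdot)$ together with the triangle inequality.

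For the "only if" direction, I would suppose $P$ is feasible and argue inductively along the sequence. The key observation is that when the pursuer executes the sweep, it visits $v_{j_1}, v_{j_2}, \ldots$ in order, and at each node it must stay long enough to conclusively determine that node's eventual state (red or green) — this is exactly the requirement in Definition~\ref{def:sweep}. As long as the visited nodes come up green, the evader could still be in any of the not-yet-eliminated subtrees, including the subtree rooted at $v_{j_k}$ for the next $k$; hence, conditioned on the evader actually going through $v_{j_k}$, the pursuer is in exactly the situation described by the definition of $D(v_{j_k})$ — it is at node $v_{j_k}$ and must guarantee capture via a node-sweeping policy given the evader's path goes through $v_{j_k}$. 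Therefore the arrival time at $v_{j_k}$ can be at most $D(v_{j_k})$; combined with Lemma~\ref{lem:delay_sweep_arrival} (which forces departure no earlier than $t(v_{j_k})$), the visit to $v_{j_k}$ must lie in $[t(v_{j_k}), D(v_{j_k})]$. For the terminal node $w$: if the first $\mu - 1$ siblings are all green, the evader is committed to the subtree through $v_{j_\mu}$, hence (by definition of $\mathcal{L}$) committed to pass through every node on the $v_{j_\mu}$-to-$\mathcal{L}(v_{j_\mu})$ path; if $w$ is on that path, feasibility given the evader goes through $w$ means the arrival is before $D(w)$ by definition of $D(w)$; if $w$ is a child of $\mathcal{L}(v_{j_\mu})$, then the pursuer is about to begin a node-sweep of the siblings of $w$ starting at $w$, so feasibility means arrival before $S(w)$ by definition of $S(w)$.

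For the "if" direction, I would show that the three conditions are sufficient by exhibiting the capture strategy. Suppose the pursuer can schedule visits to $v_{j_1}, \ldots, v_{j_{\mu-1}}$ with each visit in $[t(v_{j_k}), D(v_{j_k})]$. If at some $v_{j_k}$ the sensor reads red, the evader's path is committed to the subtree through $v_{j_k}$, and since the pursuer arrived by time $D(v_{j_k})$ and $D(v_{j_k})$ is by definition the latest arrival time still permitting a guaranteed capture via a node-sweeping policy given the evader goes through $v_{j_k}$, the pursuer can complete the capture. If all of $v_{j_1}, \ldots, v_{j_{\mu-1}}$ are green, the evader must be going through the subtree at $v_{j_\mu}$ (it has to go through one of the $\mu$ children), so it will pass through $w$ when $w$ is on the path to $\mathcal{L}(v_{j_\mu})$, or it is committed to $\mathcal{L}(v_{j_\mu})$ and hence to sweeping $w$'s sibling set when $w$ is a child of $\mathcal{L}(v_{j_\mu})$; in either case the assumed arrival-time bound ($D(w)$ or $S(w)$ respectively) is exactly what is needed to continue toward a guaranteed capture. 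One subtlety to handle carefully: the intermediate-node conditions only bound when the pursuer arrives at each $v_{j_k}$, and I need to confirm that the pursuer can actually realize such a schedule (i.e., the triangle inequality lets the pursuer "wait in place" at each node and then hop directly to the next), which is immediate since the pursuer is allowed to stay at a node and $d_p$ satisfies the triangle inequality.

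The main obstacle I anticipate is the bookkeeping around the terminal node $w$ and the distinction between the two cases ($w$ on the path vs. $w$ a child of $\mathcal{L}(v_{j_\mu})$): one must be careful that, after all siblings are green, the pursuer genuinely has complete knowledge of the eventual states of all nodes down to the depth of $\mathcal{L}(v_{j_\mu})$ (so that a node-sweeping policy permits visiting $w$), and that the $D(\cdot)$ versus $S(\cdot)$ bound invoked at $w$ matches the two roles $w$ can play — capturing at a node already known to be red (bound $D(w)$) versus initiating a fresh sweep of a new sibling set (bound $S(w)$). The rest is a direct unwinding of the definitions.
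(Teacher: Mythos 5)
Your proposal is correct and follows essentially the same route as the paper's proof: a direct unwinding of the definitions of a node-sweep, $t(\cdot)$, $D(\cdot)$, and $S(\cdot)$, using the fact that a red reading at $v_{j_k}$ puts the pursuer exactly in the situation defining $D(v_{j_k})$, while an all-green prefix commits the evader to the subtree through $v_{j_\mu}$ and hence to the path to $\mathcal{L}(v_{j_\mu})$, which yields the $D(w)$ versus $S(w)$ dichotomy at the terminal node. Your explicit treatment of both directions and of schedule realizability via waiting and the triangle inequality is slightly more detailed than the paper's but not a different argument.
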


\begin{proof}
By the definition of a node-sweep (Definition~\ref{def:sweep}), the pursuer must conclusively establish the eventual state of each node that it visits in the sweep.  Thus, for $1 \le j \le \mu-1$, the pursuer must visit node $v_{j_k}$ after time $t(v_{j_k})$, since that is the earliest time at which the evader will visit that node.  For a feasible policy, if any of the visited nodes is in a red state, the pursuer must be able to guarantee capture via a node-sweeping policy starting at that node.  Thus, for each $1 \le j \le \mu-1$, the pursuer must reach node $v_{j_k}$ no later than $D(v_{j_k})$.  This establishes the first item in the result.

Now suppose the first $\mu-1$ nodes are in a green state.  If the last node $w$ in the node-sweep is on the path between $v_{j_{\mu}}$ and $\mathcal{L}(v_{j_{\mu}})$, the pursuer knows conclusively that the evader will go through that node (or has already done so).  In this case, the pursuer must arrive at that node in time to guarantee (eventual) capture, leading to the second item in the result.  If the last node $w$ is a child node of $\mathcal{L}(w)$, then the (eventual) state of that node is undetermined before the pursuer gets there.   Thus, the pursuer must arrive at that node in time to begin a node-sweep, leading to the third item in the result.
\end{proof}

The above results show that evaluating feasibility of a node-sweep boils down to solving a  vehicle routing problem with time-windows \cite{bansal2004approximation}. In Appendix~\ref{sec:latest_departure_time}, we provide a linear-time algorithm to find the latest start time for a {\it given} node-sweep in order to meet the time-window constraints identified in the above lemma (the left endpoint of the time-window for the last node in the sequence is taken to be zero).  The algorithm returns $-\infty$ if it is not possible to meet the time-window constraints (i.e., if the given node-sweep is not feasible).

Based on the above definition and characterization of feasible node-sweeps, the following characterization of $S(v)$ follows by definition.

\begin{lemma}
For each $v \in \mathcal{V}_{\mathcal{R}} \setminus \{s\}$, let $\mathcal{P}_v$ be the set of all feasible node-sweeps starting at $v$.  Then $S(v)$ is the maximum of the latest start times over all node-sweeps in $\mathcal{P}_v$.  If $v$ has no siblings, then $S(v) = D(v)$.  If there are no feasible node-sweeps that start at $v$, $S(v) = -\infty$.
\label{lem:S_for_all_nodes}
\end{lemma}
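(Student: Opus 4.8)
The plan is to derive the statement by unwinding three definitions — $S(v)$ (Definition~\ref{def:latest_time_to_begin_sweep}), a feasible node-sweep, and its latest start time (Definition~\ref{def:feasible_sweep}) — together with the single observation that the pursuer is free to choose which feasible node-sweep to execute when it arrives at $v$.

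First I would dispose of the two degenerate cases directly from Definition~\ref{def:latest_time_to_begin_sweep}. If $v$ has no siblings, then $S(v) = D(v)$ by that definition, matching the lemma. And, again by that definition, $S(v) = -\infty$ precisely when there is no way to guarantee (eventual) capture via a node-sweep starting at $v$, which by Definition~\ref{def:feasible_sweep} is exactly the statement that $\mathcal{P}_v = \emptyset$; so the ``$S(v)=-\infty$'' clause is immediate, with the usual convention that a maximum over the empty set is $-\infty$.

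For the remaining (generic) case I would argue as follows. By Definition~\ref{def:latest_time_to_begin_sweep}, $S(v)$ is the latest time at which the pursuer can arrive at and depart $v$, proceed to visit the siblings of $v$ in some order beginning with $v$, and still be guaranteed to eventually capture the evader, given that the evader passed through the parent of $v$. A course of action of this form is precisely a node-sweep $P$ whose first node is $v$, and ``guaranteed eventual capture'' is by definition the property that $P$ is feasible; hence the set of admissible choices available to the pursuer at $v$ is exactly $\mathcal{P}_v$. For any fixed $P \in \mathcal{P}_v$, the latest arrival/departure time at $v$ for which $P$ remains feasible is, by Definition~\ref{def:feasible_sweep}, the latest start time of $P$. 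Since the pursuer may commit to whichever feasible node-sweep it prefers, the latest time it can arrive at $v$ and still guarantee capture is the maximum of these latest start times over $P \in \mathcal{P}_v$, which is the claimed formula.

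The one point I would address explicitly — and the only mildly nontrivial step — is why it is enough to range over node-sweeps that fix the visiting order of the siblings (and the terminal node $w$) in advance, rather than over arbitrary history-dependent strategies on the children of $v$'s parent. The justification is that, as long as each visited sibling is read green, the pursuer's trajectory while sweeping that level is fully determined: the time at which the green reading of a node $u$ is confirmed is simply $\max$ of the pursuer's arrival time at $u$ and the evader arrival time $t(u)$, and nothing about which of the remaining siblings the evader traversed is revealed. The first time a sibling reads red, the pursuer leaves the sweep and follows a node-sweeping policy inside that subtree. Consequently every strategy the pursuer could employ upon arriving at $v$ agrees, up to the first red reading, with some fixed node-sweep, so restricting attention to $\mathcal{P}_v$ loses nothing. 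Everything else is definitional bookkeeping, which is why the author can assert that the characterization ``follows by definition.''
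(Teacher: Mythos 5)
Your proof is correct and takes essentially the same route as the paper, which offers no explicit proof and simply asserts that the characterization ``follows by definition'': your argument is precisely that definitional unwinding of $S(v)$, feasibility, and latest start times, with the maximum over the empty set conventionally $-\infty$. The extra paragraph justifying the restriction to fixed sibling orderings --- that an all-green sweep reveals no information that could usefully alter the visiting order, so adaptive strategies coincide with some fixed node-sweep up to the first red reading --- is a worthwhile observation that the paper leaves implicit.
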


The above characterization of $S(v)$ relies on the notion of a feasible node-sweep, which is defined in terms of the values of $S(\cdot)$ for descendants of siblings of $v$ (from Lemma~\ref{lem:S_characterization}).  However, when a given goal node has only other goal nodes as siblings, we can directly calculate the value of $S(\cdot)$ for such nodes, as given by the following result.  Thus, such nodes will serve as ``base cases'' to calculate the values of $S(\cdot)$ (and $D(\cdot)$ via Lemma~\ref{lem:D_for_all_nodes}) for the other nodes in the network.

\begin{lemma}
Consider a node $v \in \mathcal{V}_{\mathcal{R}}$ such that all of its children $C(v) = \{v_{i_1}, v_{i_2}, \ldots, v_{i_\mu}\}$ are goal nodes.  Assume that the children are ordered such that $t(v_{i_k}) \le t(v_{i_{k+1}})$ for all $1 \le k \le \mu-1$.  Then, if the evader passes through $v$, capture is guaranteed if and only if 
\begin{equation}
d_p(v_{i_{k}}, v_{i_{k+1}}) \le t(v_{i_{k+1}}) - t(v_{i_k}), \enspace \forall k \in \{1, \ldots, \mu-1\}.
\label{eq:goal_node_sweep}
\end{equation}
If this condition is satisfied, then $S(v_{i_k}) = D(v_{i_k})~(= t(v_{i_k}))$ for all $k$ such that $t(v_{i_k}) = t(v_{i_1})$, and $S(v_{i_k}) = -\infty$ for all other $k$.  On the other hand, if the above capture condition is not satisfied, then $S(w) = -\infty$ for all $w \in C(v)$.
\label{lem:S_for_goal_nodes}
\end{lemma}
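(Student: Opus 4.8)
The plan is to reduce the problem to a clean time-window routing question and then read off everything from Lemma~\ref{lem:S_characterization}. Once the pursuer knows the evader has entered $v$, its only remaining task is to carry out a node-sweep of the children $C(v)$, all of which are goal nodes; for such children $\mathcal{L}(v_{j_\mu})=v_{j_\mu}$ and $D(v_{j_\mu})=t(v_{j_\mu})$ by Lemma~\ref{lem:delay_goal_nodes}, so the time-window $[t(v_{j_k}),D(v_{j_k})]$ appearing in Lemma~\ref{lem:S_characterization} degenerates to the single instant $\{t(v_{j_k})\}$ for each of the first $\mu-1$ visited nodes, and the ``last node'' constraint becomes: be at $v_{j_\mu}$ no later than $t(v_{j_\mu})$. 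Since the pursuer must remain at each $v_{j_k}$ until $t(v_{j_k})$ to settle its state (capturing the evader if it arrives there), and then spends $d_p(v_{j_k},v_{j_{k+1}})$ reaching the next node, a sweep $v_{j_1},\dots,v_{j_{\mu-1}},v_{j_\mu}$ is feasible if and only if $t(v_{j_k})+d_p(v_{j_k},v_{j_{k+1}})\le t(v_{j_{k+1}})$ for all $k$. This chain forces $t(v_{j_1})\le\cdots\le t(v_{j_\mu})$, so every feasible sweep visits the children in non-decreasing order of evader arrival time and therefore begins at a child attaining the minimum time $t(v_{i_1})$.

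Next I would establish the capture characterization. For the ``if'' direction, assume \eqref{eq:goal_node_sweep}; then the sweep in the given order $v_{i_1},v_{i_2},\dots,v_{i_\mu}$ satisfies the feasibility inequalities above verbatim, so executing it guarantees capture (either the evader meets the pursuer at some $v_{i_k}$, or $v_{i_1},\dots,v_{i_{\mu-1}}$ are all found green, the pursuer deduces the evader is bound for $v_{i_\mu}$, and is already waiting there by $t(v_{i_\mu})$). For the ``only if'' direction, I would argue by induction on $k$ that in every node-sweeping policy that guarantees capture the pursuer must be present at $v_{i_k}$ at time exactly $t(v_{i_k})$: up to time $t(v_{i_1})$ no goal node has turned red, so the pursuer's trajectory is identical across all scenarios ``evader heads to $v_{i_j}$,'' and since the evader might head to $v_{i_1}$ the pursuer must be at $v_{i_1}$ at $t(v_{i_1})$; having found $v_{i_1}$ green it gets no new information before $t(v_{i_2})$, so its trajectory again coincides across the surviving scenarios and it must be at $v_{i_2}$ at $t(v_{i_2})$; and so on. Since the pursuer cannot leave $v_{i_k}$ before $t(v_{i_k})$, reaching $v_{i_{k+1}}$ by $t(v_{i_{k+1}})$ forces $d_p(v_{i_k},v_{i_{k+1}})\le t(v_{i_{k+1}})-t(v_{i_k})$, which is \eqref{eq:goal_node_sweep}. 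A feasible node-sweep, if executed, guarantees capture, so the same argument shows that when \eqref{eq:goal_node_sweep} fails, no node-sweep of $C(v)$ is feasible.

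The values of $S(\cdot)$ then follow. By Lemma~\ref{lem:delay_sweep_arrival} and $D(v_{i_k})=t(v_{i_k})$ (Lemma~\ref{lem:delay_goal_nodes}), whenever $S(v_{i_k})\ne-\infty$ we have $t(v_{i_k})\le S(v_{i_k})\le D(v_{i_k})=t(v_{i_k})$, so $S(v_{i_k})$ is either $-\infty$ or $D(v_{i_k})=t(v_{i_k})$, and by Lemma~\ref{lem:S_for_all_nodes} the former happens exactly when no feasible node-sweep starts at $v_{i_k}$. If \eqref{eq:goal_node_sweep} fails, no feasible sweep exists for any child, so $S(w)=-\infty$ for all $w\in C(v)$. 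If \eqref{eq:goal_node_sweep} holds, then a feasible sweep can begin at $v_{i_k}$ only if $t(v_{i_k})=t(v_{i_1})$ (first paragraph), and conversely, for each minimum-time child $v_{i_k}$ the sweep listing $v_{i_k}$ first, then the remaining minimum-time children in any order, then the rest in sorted order is feasible — the inequalities among minimum-time children reduce to $d_p=0$, which \eqref{eq:goal_node_sweep} and the triangle inequality supply, and the remaining inequalities are exactly \eqref{eq:goal_node_sweep}. Hence $S(v_{i_k})=t(v_{i_k})=D(v_{i_k})$ for the minimum-time children and $S(v_{i_k})=-\infty$ for the others.

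The delicate points I anticipate are, first, making the ``only if'' informational argument airtight — in particular checking that no UGS reading can reach the pursuer strictly between consecutive arrival times $t(v_{i_k})$ and $t(v_{i_{k+1}})$, so that the surviving scenarios are genuinely indistinguishable and the pursuer's position is forced — and second, the bookkeeping for ties in arrival times, where \eqref{eq:goal_node_sweep} degenerates to $d_p(v_{i_k},v_{i_{k+1}})=0$: one must check that tied children then effectively occupy one location and all of them receive $S=D=t$, that the degenerate case $\mu=1$ is consistent with the convention $S(v)=D(v)$ for a child with no siblings, and that the pursuer's speed advantage ($d_p(v,v_{i_1})\le d_e(v,v_{i_1})$) makes this base case well-founded by letting the pursuer reach $v_{i_1}$ from $v$ by time $t(v_{i_1})$.
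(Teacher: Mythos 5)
Your proof is correct and follows essentially the same route as the paper's: reduce to a degenerate time-window sweep of the goal children (where $D(w)=t(w)$ forces the pursuer to be at each candidate child at exactly its evader arrival time), conclude that any feasible sweep must visit the children in non-decreasing order of $t(\cdot)$ and hence start at a minimum-time child, and handle ties via zero pursuer distances supplied by \eqref{eq:goal_node_sweep} and the triangle inequality. The paper's own proof is a much terser sketch of this same argument; your added detail (the informational indistinguishability argument for necessity and the tie bookkeeping) fills in exactly the points the paper leaves implicit.
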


\begin{proof}
Consider the node $v$ and its children $C(v)$.  If the evader has passed through $v$, then it can only be captured at one of the nodes in $C(v)$.  This is only possible if the purser can visit each of the nodes in $C(v)$ in sequence of increasing evader arrival times, departing each node $w \in C(v)$ no earlier than time $t(w)$.  If this condition is not satisfied, there is no sweep of the nodes in $C(v)$ that is guaranteed to capture the evader.  On the other hand, if the condition is satisfied, a feasible node-sweep can only begin at the goal node with the earliest evader arrival time.  If there are multiple nodes with arrival time equal to $t(v_{i_1})$, then the satisfaction of \eqref{eq:goal_node_sweep} implies that the pursuer has zero travel time between those nodes, and can begin a node-sweep at any of those nodes.  This leads to the stated result.
\end{proof}

\subsection*{Summary: An Algorithm to Calculate $D(\cdot)$ and $S(\cdot)$ for All Nodes}

Lemma~\ref{lem:delay_sweep_arrival} provides $D(v)$ for all goal nodes, and Lemma~\ref{lem:S_for_goal_nodes} provides $S(v)$ for all goal nodes that only have other goal nodes as siblings.  Using these quantities as base cases, Lemma~\ref{lem:D_for_all_nodes} and Lemma~\ref{lem:S_for_all_nodes} can be iteratively applied to calculate $D(\cdot)$ and $S(\cdot)$ for all of the other nodes in the network.  Algorithm~\ref{alg:MPD_sweeping_tree} provides the means to do this.

\begin{algorithm}
	\caption{Find Maximum Pursuer Delay for Node-Sweeping Policies on Tree Networks}
	\label{alg:MPD_sweeping_tree}
	\textbf{Input}: An instance of the Pursuit-Evasion problem, where the road network is a tree and contains $n$ nodes.  The nodes are assumed to be sorted according to a topological ordering, where $s$ is the first node and each node has outgoing edges only to nodes later in the ordering.\\
	\textbf{Output}: The latest time at which the pursuer can arrive at the source node $s$ and be guaranteed to capture the evader via a node-sweeping policy.
	\begin{algorithmic}[1]
		\STATE{For each $v \in \mathcal{V}_{\mathcal{R}}$, calculate evader arrival time $t(v)$.}
		\STATE{For each goal node $v$, set $D(v) = t(v)$.}
		\STATE{For each non-goal node $v$ whose children $C(v)$ are all goal nodes, calculate $S(w)$ for $w \in C(v)$ according to Lemma~\ref{lem:S_for_goal_nodes}.}
		\STATE{Let $K$ be the maximum depth of the tree.}
		
		\FOR{$i$ from $K-1$ to $1$}
		   \STATE{For each non-goal node $v$ at depth $i$, calculate $D(v)$ using Lemma~\ref{lem:D_for_all_nodes}.}
		   \STATE{For each node $v$ at depth $i$, calculate $S(v)$ according to Lemma~\ref{lem:S_characterization} and Lemma~\ref{lem:S_for_all_nodes}.}
		\ENDFOR

		\STATE{Calculate $D(s)$ for the source via Lemma~\ref{lem:D_for_all_nodes} and return that value.}
		
	\end{algorithmic}
\end{algorithm}

Based on Algorithm~\ref{alg:MPD_sweeping_tree}, we have the following result.

\begin{theorem}
For any given tree network, let $\Delta$ be the largest out-degree.  It is possible to calculate the maximum pursuer delay over all node-sweeping policies in $O((\Delta+1)!n)$-time, where $n$ is the number of nodes in the tree.  Thus, in bounded-degree trees, the maximum pursuer delay for node-sweeping policies can be calculated in linear time.
\end{theorem}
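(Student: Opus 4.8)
The plan is to bound the cost of each line of Algorithm~\ref{alg:MPD_sweeping_tree} separately. Every line except the computation of the $S(\cdot)$ values (line~7) will be seen to cost at most $O(\Delta n)$ in aggregate, and line~7 is what contributes the $(\Delta+1)!$ factor. Throughout, I treat each look-up of a pursuer distance $d_p(\cdot,\cdot)$ as an $O(1)$-time oracle call, since these distances are supplied with the instance, and I exploit the given topological ordering so that each forward or backward pass over the tree touches every node a constant number of times. Correctness is already established by Lemmas~\ref{lem:delay_sweep_arrival}--\ref{lem:S_for_goal_nodes}; what remains is purely the running-time count.

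First I would dispose of the preprocessing and the $D(\cdot)$ computations. The evader arrival times $t(v)$ come from a single forward pass: $t(s)=0$ and $t(v)=t(\mathrm{parent}(v))+d_e(\mathrm{parent}(v),v)$, which is $O(n)$ total. The quantities $\mathcal{L}(v)$ of Definition~\ref{def:lowest_known_descendant} satisfy $\mathcal{L}(v)=v$ when $v$ has zero or at least two children and $\mathcal{L}(v)=\mathcal{L}(c)$ when $v$ has a unique child $c$, so a memoized backward pass computes all of them, together with $d_p(v,\mathcal{L}(v))$ and the children $C(\mathcal{L}(v))$, in $O(n)$ time. Setting $D(v)=t(v)$ on goal nodes (Lemma~\ref{lem:delay_goal_nodes}) is $O(n)$, and handling the base cases of Lemma~\ref{lem:S_for_goal_nodes} for each node whose children are all goal nodes costs $O(\mu\log\mu)$ (sorting those $\mu$ children by $t(\cdot)$ and checking \eqref{eq:goal_node_sweep}), i.e.\ $O(n\log\Delta)$ overall. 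Finally, once the $S(\cdot)$ values of the (strictly deeper) children of $\mathcal{L}(v)$ are known, computing $D(v)$ for a non-goal $v$ via Lemma~\ref{lem:D_for_all_nodes} is just a maximum of $S(w)-d_p(v,w)$ over the at most $\Delta$ children $w$ of $\mathcal{L}(v)$ (or the single value $t(\mathcal{L}(v))-d_p(v,\mathcal{L}(v))$ in the degenerate case), hence $O(\Delta)$ per node. The dependency order used by the algorithm is consistent: $D(v)$ needs only $t(\cdot)$ and $S(\cdot)$ of strictly deeper nodes, while $S(v)$ needs $D(\cdot)$ of $v$'s siblings (same depth, computed in line~6 of the same iteration) and $S(\cdot),D(\cdot)$ of their strict descendants (already done). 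So far the cost is $O(\Delta n)$.

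The crux is line~7. I would \emph{charge} the computation of $S(\cdot)$ for all children of an internal node $u$ to $u$ itself, and enumerate the feasible node-sweeps of $u$'s children (Definition~\ref{def:sweep}): an ordering $v_{j_1},\dots,v_{j_\mu}$ of the $\mu$ children of $u$ (where $\mu\le\Delta$), followed by a terminal node $w$ that is a child of $\mathcal{L}(v_{j_\mu})$ or lies on the path from $v_{j_\mu}$ to $\mathcal{L}(v_{j_\mu})$. Two reductions keep the enumeration small. First, the ordered sequence of the initial $\mu-1$ children already determines the last one, so there are exactly $\mu!$ ``skeleton'' sweeps, of which $(\mu-1)!$ begin at any fixed child $v$. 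Second, by the triangle-inequality argument already used inside the proof of Lemma~\ref{lem:D_for_all_nodes}, for a fixed predecessor $v_{j_{\mu-1}}$ the best choice of $w$ over all nodes of the $v_{j_\mu}\!\to\!\mathcal{L}(v_{j_\mu})$ path and all children of $\mathcal{L}(v_{j_\mu})$ is attained among the at most $\Delta$ children of $\mathcal{L}(v_{j_\mu})$ (or at the lone node $\mathcal{L}(v_{j_\mu})$ when that set is empty or all of its $S$-values are $-\infty$), so there is no need to branch on $w$ or on the length of that path: the deadline at $v_{j_{\mu-1}}$ is a single maximum over at most $\Delta$ quantities $S(w)-d_p(v_{j_{\mu-1}},w)$. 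Each skeleton then reduces to a time-window feasibility problem on a sequence of length $\mu$, with windows $[t(v_{j_k}),D(v_{j_k})]$ for $k\le\mu-1$ (Lemma~\ref{lem:S_characterization}) and a last window whose right endpoint is the deadline just described and whose left endpoint is $0$; its latest start time is returned in $O(\mu)$ time by the linear-time routine of Appendix~\ref{sec:latest_departure_time}. Computing the last deadline costs $O(\Delta)$, so each skeleton is evaluated in $O(\mu+\Delta)=O(\Delta)$ time, and $S(v)$ is the maximum of these latest start times over the $(\mu-1)!$ skeletons beginning at $v$ (Lemma~\ref{lem:S_for_all_nodes}), with $S(v)=D(v)$ if $v$ has no siblings and $S(v)=-\infty$ if no skeleton beginning at $v$ is feasible.

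The work charged to an internal node with $\mu$ children is therefore $O(\mu!\cdot\Delta)$, and since $\mu\le\Delta$ we have $\mu!\,\Delta\le\Delta!\,\Delta\le(\Delta+1)!$; summing over the at most $n$ internal nodes, and adding the $O(\Delta n)$ and $O(n\log\Delta)$ costs of the remaining lines and the $O(n)$ preprocessing, the total is $O((\Delta+1)!\,n)$. In particular, when $\Delta$ is bounded by a constant, $(\Delta+1)!$ is a constant and the algorithm runs in $O(n)$ time. I expect the main obstacle to be the bookkeeping of the previous paragraph rather than any of the individual bounds: one must simultaneously avoid paying $(\mu-1)!$ or $\mu!$ \emph{separately for each child} of $u$ (handled by the skeleton accounting and by charging to $u$), keep the set of terminal nodes $w$ to size $O(\Delta)$ instead of proportional to the lengths of the $v\!\to\!\mathcal{L}(v)$ chains (handled by the triangle-inequality reduction), and avoid re-invoking the Appendix~\ref{sec:latest_departure_time} routine once per terminal choice (handled by folding all terminal choices into a single $O(\Delta)$-wide maximum before the single invocation). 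Checking that these reductions are valid and compatible with the feasibility characterization of Lemma~\ref{lem:S_characterization} is the only delicate point; the arithmetic of the final sum is routine.
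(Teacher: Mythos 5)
Your proposal is correct and follows essentially the same route as the paper's proof: a line-by-line cost accounting of Algorithm~\ref{alg:MPD_sweeping_tree} in which every step except the $S(\cdot)$ computation is linear up to $\mathrm{poly}(\Delta)$ factors, and the dominant $O(\Delta!\cdot\Delta)=O((\Delta+1)!)$ per-node cost comes from enumerating the $O(\Delta!)$ candidate node-sweeps and testing each with the $O(\Delta)$-time routine of Appendix~\ref{sec:latest_departure_time}. Your refinements (charging all children's sweeps to the parent, and collapsing the choice of terminal node $w$ into a single $O(\Delta)$-wide maximum via the same triangle-inequality argument used in Lemma~\ref{lem:D_for_all_nodes}) are valid but only tighten constants; they do not change the bound or the structure of the argument.
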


\begin{proof}
We will characterize the time-complexity of each step in Algorithm~\ref{alg:MPD_sweeping_tree}.  The calculation of the evader travel times $t(\cdot)$ in step $1$ can be done in $O(n)$-time by applying a simple Breadth First Search (BFS) algorithm on the tree \cite{cormen2009introduction}.  Step $2$ also takes $O(n)$ time. Step $3$ requires checking each node to see if its children are all goal nodes, and if so, determining whether the condition in Lemma~\ref{lem:S_for_goal_nodes} is satisfied.  This takes $O\left(n\Delta\ln(\Delta)\right)$ time (where the $\Delta\ln(\Delta)$ term corresponds to sorting the children of a given node by their $t(\cdot)$ values).  Finding the maximum depth of the tree in Step $4$ can be done in $O(n)$-time via BFS.  The iteration in lines $5$-$8$ visits each node in the network at most once.  For each such node $v$, calculating $D(v)$ via Lemma~\ref{lem:D_for_all_nodes} requires $O(\Delta)$ time.  Calculating $S(v)$  via Lemma~\ref{lem:S_characterization} and Lemma~\ref{lem:S_for_all_nodes} requires us to check (in the worst case) all possible node-sweeps starting at $v$ and determining feasibility.  There are $O((\Delta-1)!)$ possible permutations of the siblings of $v$.  For each permutation, the node-sweep can end at either $\mathcal{L}(v_{j_\mu})$ or at one of the children of $\mathcal{L}(v_{j_{\mu}})$, where $v_{j_{\mu}}$ is the last node in the permutation.  Thus, there are $O(\Delta!)$ possible sweeps to check.  Evaluating feasibility of a particular sweep of $\Delta$ nodes (and calculating the latest start time for the sweep) can be done in time $O(\Delta)$ via Algorithm~\ref{alg:latest_departure} provided in Appendix~\ref{sec:latest_departure_time}.  Thus, calculating $S(v)$ via line $7$ takes time $O\left(\Delta\Delta!\right) = O\left((\Delta+1)!\right)$.  Finally, calculating $D(s)$ in line $9$ takes time $O(\Delta)$.  Thus, the overall time-complexity of the algorithm is $O\left((\Delta+1)!n\right)$.
\end{proof}

\section{Numerical Evaluation}
To evaluate the potential of node-sweeping policies, we compared the maximum pursuer delay for such policies against the maximum pursuer delay over all policies (provided by the algorithm from \cite{krishnamoorthy2016pursuit}) on a real road network at Camp Atterbury, Indiana, USA.  The tree corresponding to the road network consists of $87$ nodes with maximum out degree of $3$.  The evader's speed was set to $20$ mph, and the pursuer's speed was set to $40$ mph.  The maximum pursuer delay for the optimal node-sweeping policy was calculated (using Algorithm~\ref{alg:MPD_sweeping_tree}) to be $7.841$ minutes, while the maximum pursuer delay over all policies was found (using the optimal algorithm from \cite{krishnamoorthy2016pursuit}) to be $7.897$ minutes.  However, the time taken to calculate the optimal node-sweeping policy was $0.17$ seconds on a laptop, whereas the time taken to calculate the maximum delay over all policies took more than $4$ hours;  thus, for this road network, node-sweeping policies provide less than $1$\% loss in performance, while requiring a computation time that is several orders of magnitude smaller than the optimal policy.
\section{Conclusions and Future Work}
In this paper, we studied the problem of capturing an evader in a graph, where the pursuer only obtains information about the evader's path via sensors located at the nodes.  We showed that it is NP-hard to find optimal policies to capture the evader.  Nevertheless, for a certain class of policies, we provided a linear-time algorithm to calculate the maximum pursuer delay in bounded-degree tree networks.  There are many interesting avenues for future work, including generalizing  node-sweeping policies to handle uncertain evader travel times and multiple pursuers or evaders.  It would also be of interest to identify classes of networks where node-sweeping policies are optimal.


\sloppy
\bibliographystyle{plainnat}
{\footnotesize \bibliography{refs}}

\begin{appendices}

\section{Proof of Theorem~\ref{thm:CFP_NPhard}}
\label{sec:CFP_proof}

\begin{proof}
Consider an instance of the decision version of the TSP, with complete graph $\mathcal{J} = \{\mathcal{V}_J, \mathcal{E}_J\}$, associated distance function $d(\cdot, \cdot)$, and target tour length $T$.  Let the number of nodes in the graph be denoted by $n$.  We will assume without loss of generality that $T > 0$, as the answer to the given instance of the TSP is trivially ``no'' otherwise.

We will first transform the distances in the given instance in such a way that the answer (yes or no) to the instance does not change, but that will allow us to perform a reduction to the CFP.\footnote{In particular, this transformation will increase the length of all edges in the TSP by a certain amount; this will allow us to construct an instance of the CFP that meets the constraint that the pursuer's travel times are smaller than those of the evader.}   To do this, let $c$ be defined as
\begin{equation}
c \triangleq \left\lceil \frac{2\max_{v, w \in \mathcal{V}_J}d(v, w)}{n-2} \right\rceil.
\label{eq:TSP_padding}
\end{equation}
Now define the new distances $d'(v, w) \triangleq d(v, w) + c$ for all $v \ne w$, and $T' \triangleq T+nc$.  Thus, the original instance of the TSP (with distances given by $d(\cdot, \cdot)$) will have a tour of length less than $T$ if and only if the modified instance (with distances given by $d'(\cdot, \cdot)$) has a tour of length less than $T'$.  We will therefore work with the modified instance in the rest of the proof.   Given the above (modified) instance of the TSP, we create an instance of the CFP as follows.

Pick any node $s \in \mathcal{V}_J$ to be the source node, and define the core nodes to be $\mathcal{C} \triangleq \mathcal{V}_J \setminus \{s\}$.  Create one goal node for every core node in $\mathcal{C}$, and denote the set of goal nodes by $\mathcal{G}$.  Denote $\mathcal{V}_R \triangleq \{s\} \cup \mathcal{C} \cup \mathcal{G}$, and choose the edge set $\mathcal{E}_R$ to create a spider network on $\mathcal{V}_R$, as described in Section~\ref{sec:spider_network}.  

We now specify the pursuer distances on the constructed network.  Let the pursuer distances between nodes in $\{s\} \cup \mathcal{C}$ (i.e., $\mathcal{V}_J$) be the same as in the given instance of the TSP, namely $
d_p(v, w) \triangleq d'(v, w)$ $\forall v, w \in \mathcal{V}_J$. Define the pursuer distances between the other nodes in the graph as follows.
\begin{itemize}
\item (Core nodes to their corresponding goal nodes):   For each $1 \le i \le n-1$, define $d_p(v_{c_i}, v_{g_i}) \triangleq d_p(v_{c_i},s)$, 
i.e., the time it takes the pursuer to get to $v_{g_i}$ from $v_{c_i}$ is the same as the time to go to $s$ from $v_{c_i}$. 
\item (Core nodes to other goal nodes): For each $1 \le i \le n-1$, for each $1 \le j \le n-1$ such that $j \ne i$, define $d_p(v_{c_i}, v_{g_j}) \triangleq d_p(v_{c_i},v_{c_j}) + d_p(v_{c_j}, v_{g_j})$, i.e., the shortest distance between a core node $v_{c_i}$ and a goal node $v_{g_j}$ for $j \ne i$ is to first go to the core node $v_{c_j}$ and then go to the goal node $v_{g_j}$.
\item (Source node to goal nodes): For each $1 \le i \le n-1$, define $d_p(s,v_{g_i}) \triangleq d_p(s, v_{c_i}) + d_p(v_{c_i}, v_{g_i})$, i.e., the shortest path from the source node to a goal node is to first go to the corresponding core node, and then travel to the goal node.
\item (Goal nodes to other goal nodes):  For each $1 \le i \le n-1$, for each $1 \le j \le n-1$ such that $j \ne i$, define $d_p(v_{g_i}, v_{g_j}) \triangleq d_p(v_{g_i}, v_{c_j}) + d_p(v_{c_j}, v_{g_j})$, i.e., the shortest path from one goal node to another is to go through the corresponding core nodes.
\end{itemize}
One can verify that the pursuer distance function $d_p(\cdot, \cdot)$ defined as above satisfies the triangle inequality (given that the distances in the given instance of the TSP do so).  

Finally, we specify travel times for the evader as follows:
\begin{align*}
 \enspace \forall v_{c_i} \in \mathcal{C}, \enspace &d_e(s,v_{c_i}) \triangleq d'(s,v_{c_i}), \\
 &d_e(v_{c_i}, v_{g_i}) \triangleq T' - d'(s,v_{c_i}),
\end{align*}
i.e., we take $L = T'$ in Fig.~\ref{fig:spider_network}, where $T'$ is the (modified) target tour length for the TSP.  Note that the evader travel times on the roads between core nodes and goal nodes satisfy
\begin{align*}
d_e(v_{c_i},v_{g_i}) &= T' - d'(s,v_{c_i}) \\
&= T + nc - d(s,v_{c_i}) - c \\
&= T + (n-2)c - d(s,v_{c_i}) + c \\
&\ge T + 2\max_{v, w\in \mathcal{V}_J}d(v, w) - d(s,v_{c_i}) + c \\
&\ge d(s,v_{c_i}) + c = d_p(v_{c_i},v_{g_i})
\end{align*}
as required for Pursuit-Evasion problems we are considering.  

This completes the construction of the instance of the CFP; note that this construction requires only a polynomial number of operations (in terms of the size of the given instance of the TSP).  We now argue that the answer to the constructed instance of the CFP is ``yes'' if and only if the answer to the (modified) instance of the TSP is ``yes''.

First suppose that the (modified) instance of the TSP has a tour of length $\bar{T} < T'$ (i.e., the answer to the given instance is ``yes'').  Let the pursuer enter at time $T'-\bar{T}$ and follow such a tour on the nodes $\{s\}\cup \mathcal{C}$, starting from node $s$.  There are two possible scenarios:
\begin{enumerate}
\item The pursuer encounters a core node $v_{c_i}$ in a red state while following this tour, and then heads to the corresponding goal node $v_{g_i}$.  Let $C_{s, v_{c_i}}$ be the length from node $s$ to $v_{c_i}$ on the tour, and recall from our construction that $d_p(v_{c_i},v_{g_i}) = d_p(v_{c_i}, s)$.  Then, the pursuer arrives at the goal node $v_{g_i}$ at time
$$
T' - \bar{T} + C_{s, v_{c_i}} + d_p(v_{c_i}, s) \le T',
$$
by the triangle inequality.  
\item The first $n-2$ core nodes encountered by the pursuer are in a green state, and thus the pursuer heads to the goal node corresponding to the last core node.  Let the two core nodes immediately preceding $s$ in the given TSP tour be denoted $v_{c_i}$ and $v_{c_j}$, respectively (i.e., the tour contains the sequence $v_{c_j} \rightarrow v_{c_i} \rightarrow s$).  Under this scenario, the pursuer arrives at $v_{g_i}$ at time
\begin{align*}
&T' - \bar{T} + C_{s, v_{c_j}} + d_p(v_{c_j}, v_{g_i}) \\
&= T' - \bar{T} + C_{s, v_{c_j}} + d_p(v_{c_j}, v_{c_i}) + d_p(v_{c_i}, v_{g_i}) \\
&= T' - \bar{T} + C_{s, v_{c_i}} + d_p(v_{c_i}, s) = T',
\end{align*}
by construction of the pursuer travel times.    
\end{enumerate}
In both cases, since the evader only arrives at the goal node at time $T'$, the pursuer captures the evader.  Thus, the answer to the constructed instance of the CFP is also ``yes.''   

We now prove the opposite direction, namely that if the answer to the constructed instance of the CFP is ``yes'', then the answer to the given instance of the TSP is also ``yes.''  Thus, suppose that there exists a policy $\mu$ that guarantees capture when the pursuer enters at some time $D > 0$.  By Proposition~\ref{prop:optimal_capture_goal_node}, this policy can be taken to visit the core nodes in some sequence and then head to a goal node.  In the worst case, the pursuer has to visit all but one of the core nodes (starting from node $s$) before heading to a goal node.  By construction of the network, the time taken to visit these core nodes and to travel to the last goal node is exactly equal to the length of a tour on all nodes in $\{s\} \cup \mathcal{C}$.  Let $\bar{T}$ be the length of this tour.  Thus, the pursuer arrives at time $D+\bar{T} \le T'$, which is the time at which the evader reaches the goal node.  Thus, the answer to the given instance of the TSP is also ``yes.''

All together, this shows that an algorithm for the CFP also yields an algorithm for the decision version of the TSP (which is NP-hard).  Thus, the CFP is NP-hard as well.
\end{proof}

\section{Finding the Latest Start Time to Visit a Sequence of Nodes Within Specified Time Windows}
\label{sec:latest_departure_time}

Given a sequence of nodes $v_1, v_2, \ldots, v_{\mu}$, along with travel times between nodes and a time window for each node, Algorithm~\ref{alg:latest_departure} determines the latest time at which a pursuer can leave node $v_1$ and visit all of the nodes (in sequence) within their time windows.  The algorithm maintains a vector $\tau$ of length ${\mu}$, with element $\tau[i]$ indicating the latest time that the pursuer can arrive at node $i$ in the sequence and still meet the time window constraints.  The algorithm returns $-\infty$ if it is impossible to visit the nodes in the given sequence within their time windows.  The algorithm starts by assigning the arrival time for the last node to be the end of its time window.  It then works backwards through the node sequence to find the largest time within each node's window at which the pursuer can arrive (and leave) in order to get to the next node.

\begin{algorithm}
	\caption{Find Latest Start Time For Feasible Node-Sweep}
	\label{alg:latest_departure}
	\textbf{Input}: An ordered set of vertices $v_1, v_2, \ldots, v_{\mu}$, a nonnegative distance $d_p(\cdot, \cdot)$ between each consecutive pair of vertices, a set of time windows $[t(v_i), D(v_i)]$ for each vertex $v_i$, $1 \le i \le {\mu}$, where $D(v_i) \ge t(v_i)$.\\
	\textbf{Output}: The latest time at which the pursuer can leave vertex $v_1$ and visit all of the nodes (in order) within their given time-windows.  The output is $-\infty$ if it is not possible to visit all nodes within their time-windows.
	\begin{algorithmic}[1]
		
		\STATE{Set $\tau[{\mu}] = D(v_{\mu})$} \COMMENT{The vector $\tau$ contains the latest arrival time at each node in order to meet all time-window constraints.}
		\FOR{$i$ from ${\mu}-1$ to $1$}
		   \IF{$\tau[i+1]-d_p(v_{i}, v_{i+1}) < t(v_i)$}
		      \STATE{Return $-\infty$}
		   \ELSE
		      \STATE{$\tau[i] = \min\{\tau[i+1]-d_p(v_{i}, v_{i+1}), D(v_i)\}$}
		   \ENDIF
		\ENDFOR   
		
		\STATE{Return $\tau[1]$}
		
	\end{algorithmic}
\end{algorithm}

\begin{proposition}
Algorithm~\ref{alg:latest_departure} returns $-\infty$ if and only if it is impossible to visit the sequence of nodes within their given time-windows.  Furthermore, if the algorithm returns a finite value, that value is the latest time at which the pursuer can depart node $v_1$ and visit the sequence of nodes within their time windows.
\end{proposition}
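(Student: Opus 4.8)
The plan is to prove the proposition by a backward induction on the position $i$ in the sequence, tracking the quantity that the algorithm's vector $\tau$ is meant to represent. Define $\tau^{*}[i]$ to be the latest time at which the pursuer can \emph{arrive} at node $v_i$ and still visit the suffix $v_i, v_{i+1}, \ldots, v_{\mu}$ within all of their time windows, with the convention $\tau^{*}[i] = -\infty$ when no such schedule exists. I would then show, by induction from $i = \mu$ down to $i = 1$, that either the algorithm returns $-\infty$ at some index $\ge i$ (which will turn out to happen exactly when $\tau^{*}[i] = -\infty$), or it assigns $\tau[i] = \tau^{*}[i]$. Instantiating at $i = 1$ gives the statement, since a schedule witnessing $\tau^{*}[1]$ lets the pursuer depart $v_1$ at that time and visit every node within its window, while no later start can, and $\tau^{*}[1] = -\infty$ is precisely the infeasible case.

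Before the induction I would record one structural fact that makes the recursion correct: for each $i$, the set of arrival times at $v_i$ from which the pursuer can complete $v_i,\ldots,v_{\mu}$ within their windows is either empty or has the form $(-\infty, \tau^{*}[i]]$ (and in particular is downward closed). This is a short exchange/monotonicity argument: arriving earlier at $v_i$ is never harmful, because the pursuer may simply wait at $v_i$ and then follow the same schedule; conversely, given any feasible schedule, replacing each arrival time by the latest admissible one, processed from the last node backward, preserves feasibility. This is the crux of why a greedy ``pull everything as late as possible'' computation is without loss of generality.

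For the base case, at $i = \mu$ the only constraint is that the pursuer be present at $v_\mu$ during $[t(v_\mu), D(v_\mu)]$, so the latest admissible arrival is $D(v_\mu)$ (which is $\ge t(v_\mu)$ by the input hypothesis $D(v_i)\ge t(v_i)$); this matches line~1. For the inductive step, assume $\tau[i+1] = \tau^{*}[i+1]$. If $\tau^{*}[i+1] = -\infty$, the suffix from $v_{i+1}$ cannot be completed, hence neither can the one from $v_i$, so $\tau^{*}[i] = -\infty$; consistently, the algorithm has already returned $-\infty$ and never reaches index $i$. If $\tau^{*}[i+1]$ is finite, then an arrival at $v_i$ at time $a$ permits completing the suffix iff $t(v_i) \le a \le D(v_i)$ and the pursuer, departing $v_i$ no earlier than $a$, can reach $v_{i+1}$ by $\tau^{*}[i+1]$, i.e. $a + d_p(v_i, v_{i+1}) \le \tau^{*}[i+1]$. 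Hence the largest such $a$ is $\min\{\tau^{*}[i+1] - d_p(v_i,v_{i+1}),\, D(v_i)\}$ when this value is at least $t(v_i)$, and no admissible $a$ exists otherwise; these two outcomes are exactly lines~3--4 and lines~5--6. One also checks that in the finite case $\tau[i] = \min\{\tau[i+1]-d_p,\,D(v_i)\} \ge t(v_i)$, which is the invariant needed to apply the step again at index $i-1$.

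I expect the main obstacle to be stating and justifying the downward-closure/exchange property cleanly, in particular ruling out that it could ever help the pursuer to arrive at an intermediate node strictly earlier than its latest admissible time in order to exploit slack further along — once that monotone structure is pinned down, both the recursion $\tau^{*}[i] = \min\{\tau^{*}[i+1] - d_p(v_i,v_{i+1}),\,D(v_i)\}$ and the equivalence ``returns $-\infty$ iff infeasible'' follow mechanically. A secondary, purely bookkeeping point is to note that the algorithm's early termination at some index $i+1$ coincides with $\tau^{*}[j] = -\infty$ for every $j \le i+1$, which is immediate from the base/step dichotomy above and completes the ``only if'' direction of the $-\infty$ claim.
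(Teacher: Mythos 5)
Your proposal is correct and follows essentially the same route as the paper's proof: a backward induction showing that the algorithm's greedy ``latest admissible arrival'' computation is exact, with the downward-closure/waiting observation playing the role of the paper's normalization $\tau'[i]\ge t(v_i)$ and its domination argument $\tau[i]\ge\tau'[i]$ against an arbitrary feasible schedule. The only cosmetic difference is that you define the optimal value function $\tau^{*}[i]$ explicitly and prove $\tau[i]=\tau^{*}[i]$, whereas the paper splits the claim into ``the computed vector is itself feasible'' plus ``it dominates every feasible schedule''; these are the same argument.
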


\begin{proof}
We start with the first statement, and consider the ``if'' condition.  Suppose by way of contradiction that it is impossible to visit the sequence of nodes within their time-windows, but that the algorithm returns some finite value $\tau[1]$.  Consider the vector $\tau$  created during the course of the algorithm.  From the definition of the entries in this vector, we have $\tau[i] \in [t(v_i), D(v_i)]$ for all $1 \le i \le {\mu}$.  Furthermore, $\tau[i+1] \ge \tau[i] + d_p(v_i, v_{i+1})$ for all $1 \le i \le {\mu}-1$.  Thus there is sufficient time for the pursuer to travel between each node in the sequence and arrive within the time-windows.  This contradicts the fact that such a route is impossible.

Now we prove that the algorithm returns $-\infty$ only if there is no feasible route through the sequence of nodes.  To do this, we will show the contrapositive, i.e., if there is a feasible route, the algorithm returns a finite value.  Consider some feasible route, and let  $\tau'[1], \tau'[2], \ldots, \tau'[{\mu}]$ be the sequence of arrival times at the nodes on that route.  Without loss of generality, we take $\tau'[i] \ge t(v_i)$ for all $i \in \{1, 2, \ldots, \mu\}$ since the pursuer has to be at each node $v_i$ at or after time $t(v_i)$, and can always delay its departure from the previous node to arrive exactly at that time if needed.  We now show that Algorithm~\ref{alg:latest_departure} will generate a finite sequence of arrival times.  First, note from line 1 of the algorithm that $\tau[{\mu}] = D(v_{\mu}) \ge \tau'[{\mu}]$.  Next, note that $\tau'[{\mu}-1] + d_p(v_{{\mu}-1}, v_{\mu}) \le \tau'[{\mu}] \le \tau[{\mu}]$.  Since $\tau'[{\mu}-1] \ge t(v_{{\mu}-1})$, we have $\tau[{\mu}] - d_p(v_{{\mu}-1}, v_{\mu}) \ge t(v_{{\mu}-1})$, and thus $\tau[{\mu}-1]$ gets assigned a finite value (lines 3 - 7).  In particular, by line $7$, $\tau[{\mu}-1]$ takes the largest value in the interval $[t(v_{{\mu}-1}), D(v_{{\mu}-1})]$ that allows it to reach $v_{\mu}$ by $\tau[{\mu}]$, and thus $\tau[{\mu}-1] \ge \tau'[{\mu}-1]$.  

Continuing in this way, suppose that the entries $j+1, j+2, \ldots, {\mu}$ have been assigned finite values in vector $\tau$, with $\tau[i] \ge \tau'[i]$ and $\tau[i] \in [t(v_i), D(v_i)]$ for all $j+1 \le i \le {\mu}$.  For the feasible route, we have $\tau'[j] + d_p(v_j, v_{j+1}) \le \tau'[j+1] \le \tau[j+1]$.  Since $\tau'[j] \ge t(v_j)$, we have $\tau[j+1]-d_p(v_j, v_{j+1}) \ge t(v_j)$, and thus $\tau[j]$ gets assigned a finite value. Since $\tau[j]$ takes the largest value in the interval $[t(v_j), D(v_j)]$ that allows it to reach $v_{j+1}$ by time $\tau[j+1]$, we have $\tau[j] \ge \tau'[j]$.  By induction, the algorithm generates finite values for all elements in $\tau$, and thus returns a finite value $\tau[1]$ in line 9.

The above inductive argument also shows that the value $\tau[1]$ returned by the algorithm is at least as large as the start time of any feasible route through the sequence of nodes, proving the last claim of the proposition.
\end{proof}

\end{appendices}

\end{document}